\theoremstyle{plain}
  \newtheorem{Thm}{Theorem}[section] 
  \newtheorem{Lma}[Thm]{Lemma} 
  \newtheorem{Cor}[Thm]{Corollary} 
  \newtheorem{Prop}[Thm]{Proposition}
\theoremstyle{definition}
  \newtheorem{Def}[Thm]{Definition}
\theoremstyle{remark}
  \newtheorem{Rem}[Thm]{Remark}
  \newtheorem{Conj}[Thm]{Conjecture}
\newcommand{\brq}{^{[q]}}
\newcommand{\mlabel}[1]%
  {\mbox{}\marginpar{\raggedleft\hspace{0pt}{\rm\ttfamily#1}}\label{#1}}
\newcommand{\length}{\operatorname{\lambda}}
\newcommand{\Hom}[3]{\operatorname{Hom}_{#1}(#2,#3){}}
\newcommand{\R}[1]{{R^{({#1})}}}
\newcommand{\num}[2]{{\#_{#1}(#2)}}
\newcommand{\ta}{s}
\newcommand{\fm}{{\mathfrak m}}
\newcommand{\fn}{{\mathfrak n}}
\newcommand{\mx}{{\boldsymbol x}}
\newcommand{\my}{{\boldsymbol y}}
\newcommand{\ringS}{\text{$(S,\fn,k)$}}
 \newcommand{\height}{{\rm ht}}
\newcommand{\Dim}{{\rm dim}}
\newcommand{\Spec}{{\rm Spec}}
\newcommand{\im}{{\rm Im}}
\renewcommand{\ge}{\geqslant} \renewcommand{\le}{\leqslant} 
\renewcommand{\geq}{\geqslant} \renewcommand{\leq}{\leqslant} 
\newcounter{hours}\newcounter{minutes}
\newcommand{\excise}[1]{}
\begin{document}

\title[The lower semicontinuity of the Frobenius splitting numbers]
{\bf The lower semicontinuity\\ of the Frobenius splitting numbers}
\author[Florian~Enescu]{Florian Enescu}
\author[Yongwei~Yao]{Yongwei Yao}
\address{Department of Mathematics and Statistics, Georgia State
  University, Atlanta, GA 30303 USA} 
\email{fenescu@gsu.edu}
\email{yyao@gsu.edu}
\subjclass[2000]{Primary 13A35}
\thanks{The first author was partially supported by the NSA grant
  H98230-07-10034.  
The second author was partially supported by the NSF grant DMS-0700554.}
\date{}


\begin{abstract}
We show that, under mild conditions, the (normalized) Frobenius
splitting numbers of a local ring of prime characteristic are lower
semicontinuous. 
\end{abstract}
\maketitle 

\section{Introduction and terminology} 

Throughout this paper, all rings are assumed to be commutative
Noetherian of positive  characteristic $p$ with $p$ prime (unless
stated otherwise explicitly).  
Let $q=p^e$ denote a power of the characteristic of the ring with $e \ge 0$. 
By a local ring $(R, \fm, k)$ we mean a Noetherian ring $R$
with only one maximal ideal $\fm$ and the residue field $k$.  

In recent years, a number of authors have studied a sequence of
numbers associated to a local ring $(R, \fm, k)$ of prime
characteristic $p>0$, called here the Frobenius splitting numbers,
that arise naturally in connection to the Frobenius homomorphism $F: R
\to R$, $F(r) =r^p,$ for all $r \in R$.   

Let us assume that $R$ is reduced and denote by $R^{1/q}$ the ring of
$q^{th}$ roots of elements in $R$ where $q=p^e$ with $e\geq
0$. Further assume that $R$ is F-finite, which by definition means
that $R^{1/q}$ is module finite  over $R$ for all $q$. Write 
$$R^{1/q} \cong R^{\oplus a_e} \oplus M_e,$$ 
which is a direct  sum decomposition
of $R^{1/q}$ over $R$ such that $M_e$ has no free direct summands. The
number $a_e$ is called the {\it $e^{th}$ Frobenius splitting number of
  $R$} and much work has been dedicated to investigating the size of
these numbers as in~\cite{HL, Ab,AE, AE2, Si,Y2, Y}. They are
intimately connected to the notions of F-purity and strong
F-regularity and in fact they can be defined more generally for local
rings of prime characteristic, F-finite or not. 

Our main result of the paper states that under mild conditions these
numbers are lower semicontinuous, and therefore they exhibit a natural
geometric behavior. In fact, we conjecture that the lower
semicontinuity of these numbers holds for all excellent locally
equidimensional rings.

We will now proceed to define the (normalized) Frobenius splitting
numbers of a (not necessarily F-finite or reduced) Noetherian local
ring $(R, \fm, k)$. 

For any $e \geq 0$, we let $R^{(e)}$ be the $R$-algebra defined as
follows: as a ring $\R{e}$ equals $R$ while the $R$-algebra structure
is defined by $r \cdot s = r^{q} s$, for all $r \in R,\, s \in
R^{(e)}$. Note that when $R$ is reduced 
we have that $\R{e}$ is isomorphic to $R^{1/q}$ as $R$-algebras. Also,
$\R{e}$ as an $\R{e}$-algebra is simply $R$ as an $R$-algebra. 
For example, given an ideal $I$ of $R$, we have $R/I \otimes_R \R{e}$
is (naturally isomorphic to) $R/I^{[q]}$, in which $I^{[q]}$ is the
ideal of $R$ generated by $\{x^q : x \in I\}$. 

Let $E=E_R(k)$ denote the injective hull of the residue field $k =
R/\fm$. We have a natural short exact sequence of $R$-modules: 
$$ 0 \to k \stackrel{\psi}\to E \stackrel{\phi}\to E/k\to 0,$$ 
which induces an exact sequence
$$\R{e} \otimes _R k \stackrel{1\otimes\psi}\longrightarrow 
\R{e} \otimes_R E \stackrel{1\otimes\phi}\longrightarrow 
\R{e} \otimes E/k \longrightarrow 0.$$

One can see that $K_e:=\im (1 \otimes_R \psi)$ is finitely generated
over $\R{e}$ and killed by $\fm\brq$. Therefore it has finite length
as an $\R{e}$-module. 

Let $u$ be a socle generator of $E$. The reader can note that $K_e$ is
in fact the $\R{e}$-submodule of $\R{e} \otimes _R E$ generated by $1
\otimes u$.

\begin{Def}
\label{def}
Let $(R, \fm, k)$ be a local ring of positive prime characteristic $p >0$
and let $e \geq 0$. Also let $E= E(k)$, $\psi$,
$\phi$ and $K_e$ be as above.

\begin{enumerate}
\item the {\it $e^{th}$ normalized Frobenius splitting number of $R$},
denoted by $\ta_e(R)$, is defined as 
$$\ta_e(R) := \frac{\lambda_{\R{e}}(K_e)}{q^{\dim(R)}} 
= \frac{\lambda_{\R{e}} (\im (1 \otimes_R \psi))}{q^{\dim(R)}}.$$ 

\item in the particular case when $R$ is F-finite, let $\alpha(R) = \log_p
[k:k^p]$ so  that $[k:k^q] = q^{\alpha(R)}$ for all $q = p^e$. Then
the {\it $e^{th}$ Frobenius splitting number of $R$}, denoted by
$a_e(R)$, is defined as 
$$a_e(R) = \ta_e(R) \cdot q^{\dim(R) +\alpha(R)}.$$

\item also, for a prime ideal $P$ in $R$, we will use $\ta_e(P)$ and
$a_e(P)$ to denote $\ta_e(R_P)$ and $a_e(R_P)$ respectively. 
\end{enumerate}
\end{Def}

The reader should note  that Yao has shown in \cite[Lemma~2.1]{Y} that
when $R$ is F-finite the numbers $a_e(R)$ are exactly the numbers
$a_e$ (mentioned earlier in the introduction) such that 
$$\R{e}= R^{\oplus a_e} \oplus M_e,$$ is a direct  sum decomposition
of $\R{e}$ over $R$ where $M_e$ has no free direct  summands.

Yao has showed the following characterization of regular local rings
which emphasizes the importance of these numbers in full generality,
see \cite[Lemma~2.5]{Y}. The corresponding result for F-finite rings
was known from work of Huneke and Leuschke~\cite{HL}.

\begin{Thm}
Let $(R, \fm, k)$ be a local ring of positive characteristic $p$, where $p$
is prime. Then $R$ is regular if and only if 
$$\ta_e (R) =1 \quad \text{for some (equivalently,
for all) $e \geq 1$.}
$$ 
\end{Thm}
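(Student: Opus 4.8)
The plan is to prove the two implications of the characterization separately; the equivalence of ``for some $e\ge 1$'' with ``for all $e\ge 1$'' then comes for free, since the forward direction will yield $\ta_e(R)=1$ for every $e$ while the converse consumes the hypothesis only for a single $e$. For the forward direction, suppose $R$ is regular. By Kunz's theorem the Frobenius endomorphism of a regular ring is flat, so $\R{e}$ is a faithfully flat $R$-module for all $e\ge 0$. Flatness makes $1\otimes\psi\colon \R{e}\otimes_R k\to \R{e}\otimes_R \E$ injective, whence $K_e\cong \R{e}\otimes_R k=\R{e}/\fm\R{e}$. Identifying $\R{e}$ with $R$ as rings, $\fm\R{e}$ becomes the ideal $\fm\brq$, so $\lambda_{\R{e}}(K_e)=\lambda_R(R/\fm\brq)$. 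If $x_1,\dots,x_d$ is a regular system of parameters ($d=\dim R$), these form a regular sequence generating $\fm$, and the standard length formula for powers of a regular sequence gives $\lambda_R\bigl(R/(x_1^q,\dots,x_d^q)\bigr)=q^{d}\lambda_R(R/\fm)=q^{d}$. Hence $\ta_e(R)=q^{d}/q^{d}=1$ for all $e$.

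For the converse, suppose $\ta_e(R)=1$ for some $e\ge 1$. First pass to the completion: $\widehat R$ has the same dimension, $\E$ is unchanged, and $\widehat{\R{e}}\otimes_{\widehat R}\E\cong \R{e}\otimes_R \E$ still contains the same finite-length module $K_e$, so $\ta_e(\widehat R)=\ta_e(R)$, while $R$ is regular iff $\widehat R$ is. So assume $R$ is complete; assume also $R$ reduced (a non-reduced ring is not regular, and a direct inspection of $1\otimes\psi$ shows $\ta_e(R)<1$ in that case). Now apply the Hochster--Huneke $\Gamma$-construction to obtain a faithfully flat local homomorphism $R\to R^{\Gamma}$ with $R^{\Gamma}$ complete, reduced, F-finite, $\dim R^{\Gamma}=\dim R$, and $\fm R^{\Gamma}$ equal to the maximal ideal of $R^{\Gamma}$; moreover $R$ is regular iff $R^{\Gamma}$ is, since the closed fibre is a field. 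Because the extension is flat with trivial closed fibre, it is compatible with the Frobenius twist, $(R^{\Gamma})^{(e)}\cong R^{\Gamma}\otimes_R\R{e}$, and with injective hulls, $R^{\Gamma}\otimes_R\E\cong E_{R^{\Gamma}}(R^{\Gamma}/\fm R^{\Gamma})$; hence it is compatible with the formation of $K_e$, and it multiplies lengths by $\lambda(R^{\Gamma}/\fm R^{\Gamma})=1$. Therefore $\ta_e(R^{\Gamma})=\ta_e(R)=1$.

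It remains to treat the F-finite case. By \cite[Lemma~2.1]{Y}, $a_e(R^{\Gamma})=\ta_e(R^{\Gamma})\,q^{\dim R^{\Gamma}+\alpha(R^{\Gamma})}$ equals the number of free direct summands in a decomposition $(R^{\Gamma})^{(e)}=(R^{\Gamma})^{\oplus a_e}\oplus M_e$ in which $M_e$ has no free summand. Comparing $(R^{\Gamma})^{(e)}$ with $R^{\Gamma}$ at the minimal primes of maximal dimension shows $a_e(R^{\Gamma})\le q^{\dim R^{\Gamma}+\alpha(R^{\Gamma})}$, i.e. $\ta_e(R^{\Gamma})\le 1$, with equality forcing $M_e=0$, i.e. $(R^{\Gamma})^{(e)}$ free over $R^{\Gamma}$. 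Since $(R^{\Gamma})^{(e)}$ is then a finitely generated free $R^{\Gamma}$-module, the Frobenius of $R^{\Gamma}$ is flat, so $R^{\Gamma}$ is regular by Kunz's theorem. As $\ta_e(R^{\Gamma})=1$, we conclude $R^{\Gamma}$ is regular, and hence $R$ is regular, $R\to R^{\Gamma}$ being faithfully flat local with regular closed fibre.

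The main obstacle is the middle step: verifying that $\ta_e$ is literally unchanged by the $\Gamma$-construction. This rests on the standard but delicate compatibilities of that construction with completion, with the Frobenius twist $(-)^{(e)}$, and with injective hulls, together with the fact that $\Gamma$ may be chosen so that $\fm$ expands to the maximal ideal of $R^{\Gamma}$; and, in the F-finite case, on pinning down $\operatorname{rank}_{R^{\Gamma}}(R^{\Gamma})^{(e)}=q^{\dim R^{\Gamma}+\alpha(R^{\Gamma})}$ (and its localizations at the minimal primes, to cover rings that are reduced but not equidimensional). The remaining ingredients --- Kunz's theorem, the length formula for regular sequences, the faithfully flat descent of regularity --- are routine.
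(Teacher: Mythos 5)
The paper does not prove this statement itself — it is quoted as \cite[Lemma~2.5]{Y} (with the F-finite case attributed to Huneke--Leuschke~\cite{HL}) — so there is no ``paper's proof'' to compare against, only the cited source. Your overall strategy (forward direction via Kunz flatness and the colength $q^d$ of $(x_1^q,\dots,x_d^q)$; converse by completing, reducing to the F-finite case via the $\Gamma$-construction, and then invoking \cite[Lemma~2.1]{Y} together with Kunz's flatness criterion) is the natural one and is close in spirit to how Yao argues. The forward direction as you write it is correct.

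Two steps in the converse are genuine gaps rather than routine omissions. First, the parenthetical ``a direct inspection of $1\otimes\psi$ shows $\ta_e(R)<1$ in the non-reduced case'' is an assertion, not an argument, and it is not immediate at the level of $K_e$. The clean way to handle it is to \emph{not} assume reducedness before applying the $\Gamma$-construction (which works for any complete local ring); once in the F-finite setting, if $\R{e}=R^{\oplus a_e}\oplus M_e$ with $a_e\ge 1$, then $\ann_R(\R{e})=\{r:r^q=0\}=0$, which for $e\ge 1$ (so $q\ge p$) forces the nilradical to vanish. In other words, $\ta_e(R^\Gamma)>0$ already implies $R^\Gamma$ is reduced, and the reduced reduction should come \emph{after}, not before, the F-finite reduction. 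Second, in the step ``comparing $(R^{\Gamma})^{(e)}$ with $R^{\Gamma}$ at the minimal primes of maximal dimension \dots with equality forcing $M_e=0$'': localizing at a single minimal prime only shows $(M_e)_P=0$ at that prime, which does not yield $M_e=0$. You must (a) observe that a minimal prime $P$ with $\dim(R/P)<\dim R$ has $\alpha(R_P)=\alpha(R)+\dim(R/P)<\alpha(R)+\dim R$, so $a_e<q^{\dim R+\alpha(R)}$, hence equality forces $R^\Gamma$ equidimensional; (b) conclude $(M_e)_P=0$ for \emph{every} minimal prime; and (c) use that $\Ass_R(R^{1/q})=\Min(R)$ for $R$ reduced, so a submodule $M_e\subseteq R^{1/q}$ vanishing at all minimal primes has $\Ass_R(M_e)=\emptyset$ and hence is zero. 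You gesture at (a)--(c) in your final paragraph, but the body of the proof states the conclusion as though localizing at one prime suffices. Finally, the compatibility of $\ta_e$ with the $\Gamma$-construction (and with completion) is a real theorem --- it is exactly \cite[Theorem~5.6]{Y} for the regular-closed-fiber case --- and should be cited rather than described as a ``standard but delicate compatibility.''
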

 
One important point to make is that if $R$ is F-finite reduced
then the Frobenius splitting numbers can be defined directly as we did
at the beginning of our introduction, while in the general case it is
necessary to first consider the normalized Frobenius splitting
numbers.  

We remark that the limit of the sequence of normalized Frobenius
splitting numbers, when it exists, is a remarkable invariant of the ring,
namely the \emph{F-signature}. That is, the F-signature of $R$, denoted
$s(R)$, equals 
$$s(R): = \lim_{e \to \infty} \ta_e(R) \quad \text{if it exists.}$$  
In fact, this is why $s_e(R)$ is called the $e^{th}$ normalized Frobenius
splitting number.

In general, we have an \emph{upper F-signature} (respectively, 
\emph{lower F-signature})
of $R$ defined by $s^+(R) = \limsup_{e \to \infty} \ta_e(R)$
(respectively, $s^-(R)=\liminf_{e \to \infty}\ta_e(R)$). 
An important result states that for an excellent local ring $R$,
$s^+(R) >0$ if and only if $s^-(R) >0$ if and only if $R$ is strongly
F-regular (see \cite[Theorem~0.2]{AL} and \cite[Theorem 1.3 (2)]{Y}).
For more information on the Frobenius splitting numbers, the
F-signature and related concepts, we refer the reader to~\cite{HL, Ab,
AE, AE2, AL, Si, Y2, Y}.

\section{The conjecture}

We are now in position to state the aim of our paper.  We remind the
reader that a function $f: X \to \mathbb{R}$, with $X$ a topological
space, is \emph{lower semicontinuous} if the set 
$X_{\leq r} := \{ x \in X : f(x) \leq r \}$ 
is closed or, equivalently, $X_{> r} := \{ x \in X : f(x) > r \}$ 
is open for all $r \in \mathbb{R}$.

A ring $(R, \fm, k)$ is \emph{equidimensional} if $\dim(R/P) =\dim(R)$
for all minimal primes $P$ of $R$. A ring $R$ is \emph{locally}
equidimensional if 
$R_P$ is equidimensional for all $P \in \Spec(R)$. If $P \subseteq Q$
are prime ideals in an equidimensional and catenary ring $R$, then
$\height(Q) = \height(P) + \height(Q/P)$ 
(see \cite[Lemma~2 on page~250]{M}).

It is also helpful to remind the reader of the following notations: for
any ideal $P$ in $R$, we denote $V(I): = \{P  \in \Spec(R): I \subseteq
P\}$ which is a closed subset in $\Spec(R)$. For $x \in R$, we denote 
$D(x): = \{ Q \in \Spec(R) : x \notin Q \}$ which is an open subset of
$\Spec(R)$.

\begin{Conj}
\label{conj}
Let $R$ be a Noetherian ring of prime characteristic $p$ 
and fix $e\geq 0$.  
Let $s_e : \Spec(R) \to \mathbb{Q}$ be defined by
$$s_e(P) : = \ta_e(R_P), \;\forall P \in \Spec(R).$$
If $R$ is excellent and locally equidimensional, then $s_e$ is lower
semicontinuous. 
\end{Conj}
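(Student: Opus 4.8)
The plan is to reduce the statement to a local/semilocal situation and then produce, for each prime $P$ with $s_e(P) > r$, an open neighborhood of $P$ on which $s_e$ stays $> r$. Since lower semicontinuity is about the sets $\{P : s_e(P) > r\}$ being open, and since $s_e$ takes values in $\mathbb Q$, it suffices to show: if $s_e(P) > r$ then there is $x \notin P$ such that $s_e(Q) > r$ for all $Q \in D(x)$. After replacing $R$ by $R_x$ for a suitable $x \notin P$ (which does not change the problem locally at $P$ and only shrinks $\Spec$ to an open set), one may assume $R$ is, say, catenable and equidimensional in a neighborhood, and that $\dim R_Q$ behaves well; the point of the excellence and local-equidimensionality hypotheses is exactly to control the denominator $q^{\dim(R_Q)}$ in the definition of $\ta_e$, via the catenary formula $\height(Q) = \height(P) + \height(Q/P)$ quoted before the conjecture.

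The central construction is to understand the numerator $\lambda_{\R{e}_Q}(K_e(R_Q)) = \lambda_{\R{e}_Q}(\im(1 \otimes \psi_{R_Q}))$ as $Q$ varies. First I would give a presentation of $K_e$ at $P$: since $K_e(R_P)$ is the $\R{e}_P$-submodule of $\R{e}_P \otimes_{R_P} E_{R_P}(k(P))$ generated by $1 \otimes u$ and is killed by $\fm_P^{[q]}$, it has a finite free presentation over $R_P$, and by standard spreading-out (generic freeness / finite presentation) this presentation extends to $R_x$ for suitable $x \notin P$, so that $K_e$ becomes a finitely generated $\R{e}$-module on $D(x)$ whose formation commutes with localization. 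The subtle issue is that injective hulls and socle generators do not localize naively ($E_R(k) \otimes R_Q \neq E_{R_Q}(k(Q))$ in general), so I would instead work with a concrete model: represent the relevant map by the Koszul/Čech description, or use that $K_e(R_Q)$ is computed as the image of $R_Q/\fm_Q^{[q]}$-type data into a top local cohomology module, and check that this image's length, divided by $q^{\dim R_Q}$, is constrained from below on a neighborhood. Concretely, one wants: $\lambda_{\R{e}_Q}(K_e)_Q$ is, up to the factor $q^{\height(Q/P)}$ coming from the change in dimension, bounded below by $\lambda_{\R{e}_P}(K_e(R_P))$ — i.e. the normalized splitting number cannot drop in an open neighborhood.

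The key inequality I would aim to prove is a \emph{semicontinuity at the level of lengths}: for $Q$ in a suitable $D(x)$ containing $P$ in its closure,
$$\ta_e(R_Q) \;\ge\; \ta_e(R_P),$$
which would give openness of $\{s_e > r\}$ immediately by taking $r$ slightly below $s_e(P)$ (using that $s_e(P) \in \mathbb Q$). To get this I would pass to $R_Q$, complete, and use that $\ta_e$ is unchanged under completion; then relate $\ta_e(R_Q)$ to $\ta_e(R_P)$ via the going-down behavior of the module $K_e$. The cleanest route is probably: (i) choose a minimal generator/relation model for $K_e$ near $P$ as above so that $(K_e)_Q \cong K_e(R_Q)$-as-a-quotient when $Q \supseteq P$ lies in the good locus; (ii) use flatness of $R_P \to R_Q$'s completion-type maps, or rather the fact that $\lambda_{\R{e}_Q}$ of a module supported at $Q$ is at least $\height(Q/P)$-th power of $q$ times the "$P$-part" — this uses equidimensionality and catenarity of $R_Q$ to identify $\dim R_Q = \height Q$ with $\height P + \height(Q/P)$; (iii) conclude the normalized count is nondecreasing under specialization within the good open set.

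The main obstacle, which I expect to be the technical heart of the paper, is step (i)–(ii): making precise that the "splitting data" $K_e$ spreads out and that its length, after normalizing by $q^{\dim}$, does not decrease when one specializes to a larger prime. The failure of injective hulls to localize means one cannot simply say $(K_e)_Q = K_e(R_Q)$; one must instead characterize $K_e$ intrinsically (e.g. as $\operatorname{Hom}$ or as the "Frobenius-theoretic socle image" in a way that does commute with localization on an open set), and separately control the jump in Krull dimension, which is exactly where \emph{excellent} (so that the singular locus is closed and dimension is well-behaved under localization) and \emph{locally equidimensional} (so that $\dim R_Q$ decomposes additively along the chain $P \subseteq Q$) are used. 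If those two ingredients — spreading-out of $K_e$ and additivity of dimension — can be combined into the single inequality $\ta_e(R_Q) \ge \ta_e(R_P)$ on a neighborhood of $P$, the lower semicontinuity follows formally.
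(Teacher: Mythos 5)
The statement you have been asked to prove is exactly that --- a \emph{conjecture} --- and the paper does not prove it in the generality stated. The authors explicitly say they will show it ``holds true in many significant cases'' and then establish four special cases: $R$ F-finite (Corollary~\ref{F-finite}), $R$ a homomorphic image of a regular ring (Theorem~\ref{hom}), $R$ excellent Gorenstein (Theorem~\ref{gor}), and $R$ essentially of finite type over an excellent semi-local ring (Theorem~\ref{ess}). So there is no ``paper's own proof'' to compare against for the full conjecture; your proposal is a blind attempt at something the paper leaves open.

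That said, your sketch correctly identifies the high-level scaffolding that the paper actually uses in each of the cases it does handle: apply Nagata's openness criterion (Proposition~\ref{nagata}), use local equidimensionality plus catenarity to force $\height(Q) = \height(P) + \height(Q/P)$ so the denominator $q^{\dim R_Q}$ is controlled, and try to compare the numerator $\lambda(K_e)$ at $P$ and at a nearby $Q \supseteq P$ by ``spreading out'' data from $R_P$ to an open neighborhood $D(x)$. You also correctly identify the core technical obstruction: injective hulls and socle generators do not localize, so there is no canonical module $K_e$ on $D(x)$ whose localization at every $Q$ gives $K_e(R_Q)$. This is exactly the point at which the paper specializes. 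For $R = S/I$ with $S$ regular, Proposition~\ref{rewrite} replaces $\lambda(K_e)$ with the intrinsic colon-ideal formula $\length_S\bigl(S/(\fn^{[q]} : (I^{[q]}:I))\bigr)$, which does spread out; for $R$ Gorenstein, Lemma~\ref{description} replaces it with $\length\bigl((Ru^q + (\mx)^{[q]})/(\mx)^{[q]}\bigr)$ via a system of parameters and a socle generator, which again spreads out; for the essentially-of-finite-type case one completes and reduces to Theorem~\ref{hom}. Your proposal does not supply a replacement that works for an arbitrary excellent locally equidimensional ring: the suggestions you offer (Koszul/\v Cech description, ``image into a top local cohomology module'') presuppose something like Cohen--Macaulay or Gorenstein structure, and the phrase ``finite free presentation \ldots\ extends to $R_x$'' is precisely what fails for general $E_R(k)$. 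You flag this honestly as ``the technical heart,'' but it is not a detail to be filled in later --- it is the open problem.

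Two smaller issues. First, even granting the spreading-out step, you would only get $\ta_e(Q) \geq \ta_e(P)$ (or $=$, as in the paper's proofs) for $Q \in D(x) \cap V(P)$, which gives Nagata's condition~(ii); you also need condition~(i), namely that $\{\,s_e > r\,\}$ is stable under generization, which the paper gets from \cite[Proposition~5.2]{Y} (normalized Frobenius splitting numbers can only increase under localization over a locally equidimensional ring). You never cite this, and without it the Nagata criterion does not close. Second, your opening reduction ``produce an open neighborhood of $P$ on which $s_e$ stays $> r$'' overstates what the specialization argument yields; the argument as you describe it only reaches primes $Q \supseteq P$, so you are implicitly relying on Nagata's criterion and must account for both halves of it. Neither of these is hard to fix, but the central gap --- a uniform, localization-compatible model for $K_e$ outside the special cases --- remains, and it is why the general statement is a conjecture rather than a theorem in this paper.
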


In this paper we will show that Conjecture~\ref{conj} holds true in
many significant cases.  In light of the fact that the F-signature and
the Hilbert-Kunz multiplicity of a ring exhibit at times parallel
behavior, it is perhaps interesting to note here 
that Shepherd-Barron proved in~\cite{S} that the Hilbert-Kunz
functions are upper semicontinuous for an excellent and locally
equidimensional ring of prime characteristic. 

First we note the following rather general fact.

\begin{Prop}
\label{modulecase}
Let $R$ be a Noetherian ring (not necessarily of prime characteristic
$p$) and $M$ a finitely generated $R$-module.
For a prime ideal $P$ of $R$, let $\num{P}{M}$ equal the maximal 
number of free copies of $R_P$ as direct summands in $M_P$. 
Consider the function 
$$\Spec(R) \to \mathbb{Q} \quad \text{defined
  by} \quad P \mapsto \num{P}{M}.$$
Then this function is lower semicontinuous.
\end{Prop}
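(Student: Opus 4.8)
The plan is to show that for each $r \in \mathbb{Q}$, the set $U_r := \{P \in \Spec(R) : \num{P}{M} > r\}$ is open, which (since $\num{P}{M}$ takes nonnegative integer values) amounts to showing that $\{P : \num{P}{M} \ge n\}$ is open for each integer $n \ge 1$. Fix a prime $P$ with $\num{P}{M} \ge n$, so there is a surjection $M_P \to R_P^{\oplus n}$ of $R_P$-modules, equivalently a split injection $R_P^{\oplus n} \hookrightarrow M_P$. The strategy is to spread this data out to an open neighborhood of $P$ and then use that splitness is an open condition detected by a single element not lying in $P$.

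Concretely, I would first choose a surjection $\pi_P : M_P \twoheadrightarrow R_P^{\oplus n}$. Since $M$ is finitely generated, after multiplying by a suitable element of $R \setminus P$ we may assume $\pi_P$ is the localization of a homomorphism $\pi : M \to R^{\oplus n}$ of $R$-modules. The cokernel $C := \op{coker}(\pi)$ is a finitely generated $R$-module with $C_P = 0$, hence $(\ann_R C)_P = R_P$, so there is $x \in \ann_R C$ with $x \notin P$; then $\pi$ becomes surjective after localizing at any prime in $D(x)$. Next, over $R_P$ the surjection $\pi_P$ splits (because $R_P^{\oplus n}$ is free, hence projective), giving a section $\sigma_P : R_P^{\oplus n} \to M_P$ with $\pi_P \sigma_P = \op{id}$. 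Again because $M$ is finitely generated and $R^{\oplus n}$ is finitely presented, after inverting one more element $y \notin P$ the section $\sigma_P$ is the localization of some $\sigma : R^{\oplus n}[1/y] \to M[1/y]$, and the identity $\pi \sigma = \op{id}$ — being an equality of maps out of the finitely presented module $R^{\oplus n}[1/y]$ — holds after inverting a further element $z \notin P$. Setting $W := D(xyz)$, an open neighborhood of $P$, for every $Q \in W$ the localized map $\pi_Q : M_Q \to R_Q^{\oplus n}$ is a split surjection, so $R_Q^{\oplus n}$ is a direct summand of $M_Q$ and $\num{Q}{M} \ge n$. Thus $\{P : \num{P}{M} \ge n\}$ is open, completing the proof.

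The only real subtlety is the passage from "holds at $P$" to "holds on an open set," and this is entirely standard localization bookkeeping: a finitely generated module vanishes on a neighborhood of $P$ iff it is killed by an element outside $P$; and a homomorphism out of a finitely presented module that vanishes (or equals a given map) after localizing at $P$ already does so after inverting a single element outside $P$. I do not anticipate a genuine obstacle here; one just has to apply these facts in the right order, first to make $\pi$ surjective, then to produce $\sigma$, then to force $\pi\sigma = \op{id}$. Note this proposition makes no use of characteristic $p$; it will be applied later with $M = \R{e}$ (or a localization thereof) to control the Frobenius splitting numbers.
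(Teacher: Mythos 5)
Your proposal is correct and follows essentially the same approach as the paper: spread the surjection $M_P \twoheadrightarrow R_P^{\oplus n}$ out to a map $\pi : M \to R^{\oplus n}$ and invert an element killing the cokernel. The only difference is that you then spread out the section $\sigma$ as well, which is unnecessary: once $\pi_Q : M_Q \to R_Q^{\oplus n}$ is surjective for $Q \in D(x)$, it automatically splits because $R_Q^{\oplus n}$ is free, hence projective — so the paper stops there, making your steps involving $y$ and $z$ redundant.
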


\begin{proof}
Let $r \in \mathbb{R}$ and $P \in \Spec(R)$ such that $\num{P}{M} >
r$. Let $n =\num{P}{M}$. 
Then there exists a $R$-linear surjection 
$$M_P \to R_P^{\oplus n} \to 0.$$

Note that $(\Hom{R}{M}{R^n})_P \cong \Hom{R_P}{M_P}{R_P^{\oplus n}}$ and so
one can lift the above surjection to $\phi \in
\Hom{R}{M}{R^{\oplus n}}$, which gives the following 
exact sequence
$$M \stackrel{\phi}\longrightarrow R^{\oplus n} \longrightarrow C
\longrightarrow 0,
$$
in which $C$ is the cokernel of $\phi$. This forces $C_P=0$ as
$\phi_P$ is surjective. 

But $C_P=0$ implies that there exists $x \not\in P$ such that $C_x
=0$, which gives the following exact sequence
$$ M_x \stackrel{\phi_x}\longrightarrow R_x^{\oplus n} \longrightarrow 0.$$
Thus, for all $Q \in D(x) = \{ Q \in \Spec(R) : x \notin Q\}$, there is
an exact sequence
$$M_Q \longrightarrow R_Q^{\oplus n} \longrightarrow 0,$$
which implies that $\num{Q}{M} \geq n > r$ for all $Q$ in the open set
$D(x)$. 
\end{proof}

\begin{Lma}
\label{kunz}
Let $R$ be a ring of prime characteristic $p$, F-finite and locally
equidimensional. 

Then, on a connected component of $\Spec(R)$, the number ${\dim(R_P)
+\alpha(R_P)}$ is constant, in which $\alpha(R_P)$ is as described in
Definition~\ref{def}~(2). 

\end{Lma}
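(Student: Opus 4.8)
The plan is to reduce the statement to the case when $R$ is an F-finite domain, where $\dim(R_P)+\alpha(R_P)$ can be recognized as the $\log_q$ of a single number — the generic rank of $R^{1/q}$ over $R$ — which is manifestly independent of $P$.

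\emph{Reduction to a domain.} Given a connected component $C$ of $\Spec(R)$, I would write it as the union of the finitely many irreducible closed sets $V(\fp)=\Spec(R/\fp)$ with $\fp$ a minimal prime of $R$ lying in $C$; each such $V(\fp)$ is connected (hence contained in $C$) and every point of $C$ lies on one of them. Each $R/\fp$ is an F-finite domain, so in particular locally equidimensional. For $P\in V(\fp)$ with $\fp$ minimal, $\fp R_P$ is a minimal prime of the equidimensional ring $R_P$, so $\dim(R_P)=\dim(R_P/\fp R_P)=\dim((R/\fp)_P)$, while $\alpha(R_P)=\log_p[\kappa(P):\kappa(P)^p]$ depends only on the residue field $\kappa(P)$, which is unchanged upon passing to $R/\fp$. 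Hence on $V(\fp)$ the function $P\mapsto\dim(R_P)+\alpha(R_P)$ equals the corresponding function attached to $R/\fp$. Assuming the latter is shown to be constant on $\Spec$ of an F-finite domain, our function is then constant on each $V(\fp)$; and since $C$ is connected, its finite closed cover $\{V(\fp)\}$ cannot be partitioned into two subfamilies with disjoint unions, so these constants must all agree. This reduces everything to the domain case.

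\emph{The domain case.} Assume now that $R$ is an F-finite domain with fraction field $K$, and fix $q=p^e$. Since $R$ is reduced, $\R{e}\cong R^{1/q}$ as $R$-algebras, and $R^{1/q}$ is a finitely generated torsion-free $R$-module; I set $\rho:=\rank_R(R^{1/q})=\dim_K(R^{1/q}\otimes_R K)=[K^{1/q}:K]$, which is finite because $K^{1/q}=R^{1/q}\otimes_R K$ is a finitely generated $K$-module. For any prime $P$, $(R^{1/q})_P\cong(R_P)^{1/q}$ is a finitely generated torsion-free $R_P$-module whose rank, taken over the same field $K$, is again $\rho$. Then I would invoke Kunz's rank formula for F-finite local domains, $\rank_{R_P}\bigl((R_P)^{1/q}\bigr)=q^{\dim(R_P)+\alpha(R_P)}$: comparing, $q^{\dim(R_P)+\alpha(R_P)}=\rho$ for every $P$, so $\dim(R_P)+\alpha(R_P)=\log_q\rho$ does not depend on $P$. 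This completes the domain case, hence the lemma.

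\emph{The main obstacle.} The one ingredient that genuinely needs a proof is Kunz's rank formula $\rank_R(R^{1/q})=q^{\dim R+\alpha(R)}$ for an F-finite local domain $(R,\fm,k)$. It can be reduced to the case $q=p$ by multiplicativity of degrees in the tower $K\subseteq K^{1/p}\subseteq K^{1/q}$ — here $R^{1/p}$ is again an F-finite local domain of the same dimension, with $\log_p[k^{1/p}:(k^{1/p})^p]=\log_p[k:k^p]=\alpha(R)$, so an induction on $e$ goes through — and the residual case is the identity $[K:K^p]=p^{\dim R}\cdot[k:k^p]$, which is the classical theorem of Kunz and is where the real work sits. (If one only wants the case where $R$ is a domain essentially of finite type over a perfect field, this can be avoided, since there $\alpha(R_P)=\operatorname{trdeg}\kappa(P)=\dim(R/P)=\dim R-\dim(R_P)$, making the sum visibly $\dim R$.)
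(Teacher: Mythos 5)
Your argument is correct and, modulo the one deep ingredient you explicitly flag (the Kunz identity $[K:K^p]=p^{\dim R}[k:k^p]$ for an F-finite local domain), it is self-contained. The paper, by contrast, disposes of the lemma with a one-line citation to \cite[Corollary~2.7]{K} and relegates the substance to the remark that follows: Kunz's published hypothesis (``equidimensional'') is insufficient, and the statement is rescued by ``locally equidimensional'' because that is what justifies the height identity $\height(Q)=\height(P)+\height(Q/P)$ that Kunz used silently. Your write-up reconstructs exactly this corrected argument: the one place local equidimensionality enters is the equality $\dim(R_P)=\dim((R/\fp)_P)$ for $\fp$ minimal under $P$, which is the height identity in the special case $\height(\fp)=0$; and once on a domain the invariant $\dim(R_P)+\alpha(R_P)$ is pinned down as $\log_q$ of the generic rank $[K^{1/q}:K]$, which visibly cannot depend on $P$. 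Your intersection-graph gluing across the $V(\fp)$'s, the residue-field computation $\alpha(R^{1/p})=\alpha(R)$ feeding the induction from $q=p$ to general $q$, and the identification of rank with field degree are all sound. In short, the two are the same mathematics; you supply the derivation while the paper merely cites it and isolates the needed correction in a remark.

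One small flaw in your closing aside: for $R$ only \emph{essentially} of finite type over a perfect field, the chain $\alpha(R_P)=\operatorname{trdeg}\kappa(P)=\dim(R/P)=\dim R - \dim(R_P)$ fails term by term (take $R=k_0[x,y]_{(x)}$ and $P=0$, where $\operatorname{trdeg}\kappa(P)=2$ yet $\dim(R/P)=1$). The sum $\dim(R_P)+\alpha(R_P)$ is indeed still constant, but it equals the dimension of a finite-type model rather than $\dim R$. This does not affect the main argument, which correctly routes through Kunz's formula instead.
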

\begin{proof}
This was essentially proved by Kunz in \cite[Corollary~2.7]{K}. 
\end{proof}

\begin{Rem}
The reader should be aware that Kunz states his result under the
hypothesis that $R$ is equidimensional. In the generality stated
in~\cite{K} the result is not correct as  Shepherd-Barron showed
in~\cite{S}. 

The error in the proof of Kunz is in the last line of his proof where
he assumes without proof that $\height(Q)= \height(P) + \height(Q/P)$
for prime ideals $P \subseteq Q$ in $R$. For our Lemma stated above,
this follows from the condition that $R$ is locally equidimensional:
if we localize at $Q$ we get an equidimensional ring $R_Q$ and in an
equidimensional excellent local ring the relation holds as remarked at
the beginning of this section (cf. \cite[Lemma~2 on page~250]{M}).
\end{Rem}

As an immediate consequence we obtain:

\begin{Cor}
\label{F-finite}
Let $R$ be a F-finite locally equidimensional ring of positive
characteristic $p$, $p$ prime.  
Define $a_e:\Spec(R) \to \mathbb{Q}$ by $a_e(P) = a_e(R_P)$ (the
F-finite property localizes, so the definition is possible). 

Then the Frobenius splitting numbers and the normalized Frobenius
splitting numbers are lower semicontinuous, i.e., both $s_e$ and $a_e$
are lower semicontinuous functions for all $e$. Moreover, these
functions are proportional on each connected component of $\Spec(R)$
(with a possibly different factor of proportionality on each
component). 
\end{Cor}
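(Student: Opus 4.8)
The plan is to derive both semicontinuity statements from Proposition~\ref{modulecase} together with Lemma~\ref{kunz}, by identifying $a_e(R_P)$ with the free-rank invariant $\num{P}{M}$ for a suitable fixed finitely generated module. First I would take $M := \R{e}$, viewed as a finitely generated $R$-module (this is exactly the F-finiteness hypothesis), and recall the result of Yao cited in the excerpt (\cite[Lemma~2.1]{Y}): for a local F-finite ring, $a_e$ counts the maximal number of free summands in $\R{e}$. Localizing at $P$, one checks that $(\R{e})_P \cong (R_P)^{(e)}$ as $R_P$-modules, so $\num{P}{\R{e}} = a_e(R_P) = a_e(P)$. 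Proposition~\ref{modulecase} then immediately gives that $P \mapsto a_e(P)$ is lower semicontinuous on $\Spec(R)$.

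Next I would pass from $a_e$ to $s_e$. By Definition~\ref{def}~(2) we have, for each $P$, the relation $a_e(P) = s_e(P)\cdot q^{\dim(R_P)+\alpha(R_P)}$, i.e.\ $s_e(P) = a_e(P)/q^{\dim(R_P)+\alpha(R_P)}$. By Lemma~\ref{kunz}, the exponent $\dim(R_P)+\alpha(R_P)$ is \emph{constant}, say equal to $c$, on each connected component of $\Spec(R)$. Since $\Spec(R)$ is a finite disjoint union of its connected components, each of which is clopen, it suffices to prove lower semicontinuity on each component separately; there $s_e = q^{-c}\,a_e$ is a fixed positive scalar multiple of $a_e$, hence also lower semicontinuous. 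This scalar $q^{-c}$ is the advertised (component-dependent) factor of proportionality, which proves the "moreover" clause as well.

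The only point requiring a little care — and the part I expect to be the main obstacle — is the identification $\num{P}{\R{e}} = a_e(P)$ and in particular the compatibility of the localization $(\R{e})_P$ with forming $(R_P)^{(e)}$: one must unwind the twisted $R$-module structure on $\R{e}$ (where $r\cdot s = r^q s$) and verify that inverting the multiplicative set $R\setminus P$ on the target side corresponds to inverting $(R\setminus P)^{[q]}$, equivalently $R\setminus P$ again since $P$ is prime, so that $(\R{e})_P$ is naturally the corresponding algebra over $R_P$. Once this bookkeeping is done, and one invokes \cite[Lemma~2.1]{Y} to know that in the local case the maximal free rank of $\R{e}$ really is $a_e$, everything else is formal: apply Proposition~\ref{modulecase}, then rescale by the locally constant factor supplied by Lemma~\ref{kunz}.
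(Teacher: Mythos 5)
Your plan is correct and matches the paper's proof essentially step for step: identify $a_e(R_P)=\num{P}{\R{e}}$ (via \cite[Lemma~2.1]{Y}), apply Proposition~\ref{modulecase} to the finitely generated module $\R{e}$, and then use Lemma~\ref{kunz} to transfer lower semicontinuity to $s_e$ by the locally constant factor $q^{\dim(R_P)+\alpha(R_P)}$. The localization bookkeeping you flag is indeed the only subtlety, and your handling of it is sound.
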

\begin{proof}
As remarked earlier, we have the relationship 
$$a_e(R_P) = \ta_e(R_P) \cdot q^{\dim({R_P}) +\alpha(R_P)}, \; \forall P \in
\Spec(R) .$$

Since lower semicontinuity can be checked on each connected component
of $\Spec(R)$, we may assume $\Spec(R)$ is connected without loss of
generality.   

But then $a_e(P)= a_e(R_P)$ and $s_e(P)= \ta_e(R_P) $ are
proportional, as $\dim(R_P) +\alpha(R_P)$ is constant by
Lemma~\ref{kunz}. 
Hence it suffices to show $a_e$ is lower semicontinuous.

Also note that $a_e(R_P)$ is simply $\num{P}{\R{e}}$ as in
Proposition~\ref{modulecase}. Since $\R{e}$ is finitely generated over
$R$ because $R$ is F-finite, we can apply Proposition~\ref{modulecase}
and conclude that $a_e$ is lower semicontinuous.  This implies that
$s_e$ is lower semicontinuous as well by the fact that they are
related by proportionality. 
\end{proof}

This Corollary answers our Conjecture~\ref{conj} in the F-finite case.
Much more work will be needed if we are not under the presence of the
F-finite condition as our next sections will show. 

We will need the following criteria attributed to Nagata; see \cite{M2}.

\begin{Prop}
\label{nagata}
Let $R$ be a Noetherian ring and $U$ be a subset of $\Spec(R)$.
Then $U$ is open if and only if both of the following statements hold
\begin{enumerate}
\item[(i)] if $P \in U$, $Q \in \Spec(R)$ and $Q \subseteq P$, then $Q \in U$.
\item[(ii)] for all $P \in U$, $U \cap V(P)$ contains a nonempty open subset
of  $V(P)$; or equivalently, for all $P \in U$, there exists $x \in R
\setminus P$ such that $D(x) \cap V(P) \subseteq U$. 
\end{enumerate}
\end{Prop}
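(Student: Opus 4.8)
The plan is to prove both implications of the equivalence, with the forward direction being essentially formal and the converse carrying the content.

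For the forward direction, suppose $U$ is open. To get (i), if $P\in U$ and $Q\subseteq P$, then $P$ lies in the closure $V(Q)$ of $\{Q\}$, so the open set $U$ (which contains $P$) must meet $\{Q\}$; hence $Q\in U$. To get (ii), note that $U\cap V(P)$ is open in the subspace $V(P)$ and contains $P$, so it is itself a nonempty open subset of $V(P)$, and a basic open $D(x)\cap V(P)$ around $P$ inside it witnesses the equivalent reformulation with $x\in R\setminus P$.

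For the converse I would argue by Noetherian induction on the closed subsets of $\Spec(R)$ (legitimate since $R$ is Noetherian, so $\Spec(R)$ is a Noetherian, hence spectral, topological space), establishing the statement: \emph{for every closed subset $Z\subseteq\Spec(R)$, the set $U\cap Z$ is open in $Z$}; taking $Z=\Spec(R)$ then finishes the proof. The two facts used repeatedly are: (a) condition (i) says precisely that $U$, and therefore every $U\cap Z$, is stable under generization; and (b) the standard principle that a constructible subset of a Noetherian spectral space which is stable under generization is open. Thus in the inductive step it suffices to show $U\cap Z$ is constructible in $Z$. If $U\cap Z=\emptyset$ this is trivial. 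If $Z$ is reducible, write $Z=Z_1\cup\cdots\cup Z_n$ as the union of its finitely many irreducible components; each $Z_i$ is a proper closed subset of $Z$, so by the inductive hypothesis $U\cap Z_i$ is open in $Z_i$, hence constructible in $Z$, and therefore $U\cap Z=\bigcup_i(U\cap Z_i)$ is constructible in $Z$. If $Z$ is irreducible with generic point $\mathfrak p$, there are two cases. If $\mathfrak p\notin U$, then stability under generization forces $U\cap Z=\emptyset$: any $P\in U\cap Z$ would have $\mathfrak p\subseteq P$, so $\mathfrak p\in U$ by (i). If $\mathfrak p\in U$, then condition (ii) applied to $\mathfrak p$ (noting $V(\mathfrak p)=Z$) yields a nonempty open subset $O$ of $Z$ with $O\subseteq U\cap Z$; then $Z\setminus O$ is a proper closed subset of $Z$, so by the inductive hypothesis $U\cap(Z\setminus O)$ is open in $Z\setminus O$, hence constructible in $Z$, and consequently $U\cap Z=O\cup\bigl(U\cap(Z\setminus O)\bigr)$ is constructible in $Z$.

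The only genuinely delicate part is the topological bookkeeping around constructibility: that every open subset, and more generally every subset open in a closed subspace, of a Noetherian space is constructible; that finite unions of constructible sets are constructible; and the cited implication "constructible $+$ generization-stable $\Rightarrow$ open." Once these standard facts are in place, the argument is purely formal, and in particular no separate verification of (i) and (ii) for subspaces is needed, since $U$ is kept fixed while only $Z$ is shrunk. I do not anticipate any serious obstacle beyond assembling these facts correctly and being careful that the Noetherian induction is set up on closed subsets (so that every proper closed subset arising in the inductive step is strictly smaller).
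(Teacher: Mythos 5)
The paper does not prove this proposition; it attributes the criterion to Nagata and cites Matsumura's \emph{Commutative algebra} for it, so there is no in-paper proof to compare against. Judged on its own, your argument is correct and complete, and it follows what is in fact the standard textbook route to Nagata's openness criterion: reduce, via the principle that a constructible subset of a Noetherian sober space stable under generization is open, to proving constructibility of $U\cap Z$ for every closed $Z$, and establish that by Noetherian induction on $Z$, splitting into the reducible case (finite union of irreducible components, each a proper closed subset) and the irreducible case with generic point $\fp$ (where condition~(i) disposes of $\fp\notin U$ and condition~(ii) furnishes the nonempty open $O\subseteq U\cap Z$ that lets you pass to the smaller closed set $Z\setminus O$). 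The details you were worried about all check out: $U\cap Z$ is indeed generization-stable in $Z$ since $U$ itself is; an open subset of a closed subspace $Z_i\subseteq Z$ is locally closed, hence constructible, in $Z$; any nonempty open in the irreducible space $V(\fp)$ automatically contains the generic point $\fp$, so the two formulations of~(ii) are genuinely equivalent; and the forward direction is, as you say, formal. This is essentially the proof one finds in Matsumura, so you have reconstructed the intended argument rather than found a new one.
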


\section{Homomorphic images of regular rings}

Throughout this section we let $R=S/I$ where $S$ is a regular ring of
prime characteristic $p$ and $I$ an ideal of $S$. As always $q =p^e$. 

\begin{Prop}
\label{rewrite}
Let $R=S/I$ be a homomorphic image of regular local ring $\ringS$.
Then 
$$\ta _e(R) \cdot q^{\dim(R)} = \length_S \left(\frac{S}{\fn \brq
    :(I \brq :I)}\right) = \length _S\left( \frac{(I\brq :I) + \fn
    \brq}{\fn \brq}\right),$$ 
for any nonnegative integer $e$. 
\end{Prop}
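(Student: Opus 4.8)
The plan is to unwind the definition of $\ta_e(R)$ via the injective hull and transport the computation to $S$ using the regularity of $S$. Recall $\ta_e(R) \cdot q^{\dim(R)} = \length_{\R{e}}(K_e)$, where $K_e = \im(1 \otimes_R \psi) \subseteq \R{e} \otimes_R E$ is the $\R{e}$-submodule generated by $1 \otimes u$, with $u$ a socle generator of $E = E_R(k)$. Since $S$ is local and regular, we can also write $R$-modules as $S$-modules; in particular $E_R(k) = \Hom{S}{R}{E_S(k)}$ by standard duality, and since $S$ is regular (hence Gorenstein), $E_S(k) \cong \Hm{\fn}{\dim S}{S}$ and Matlis duality over $S$ is available and exact. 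The key observation is that $\R{e}$ as an $R$-module corresponds to $S^{(e)}/I S^{(e)} = S^{(e)}/I^{[q]}$ (identifying $S^{(e)}$ with $S$ and using $R/I' \otimes_R \R{e} = R/(I')^{[q]}$-type computations), and that $S^{(e)} \cong S^{1/q}$ is a free $S$-module because $S$ is regular and F-finite in the relevant sense — or more precisely, one works with the flatness of Frobenius on the regular ring $S$, i.e. $S^{(e)}$ is a flat (even faithfully flat) $S$-module.

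The main steps, in order: (1) Identify $\R{e} \otimes_R E_R(k)$ with an explicit $S$-module. Using $E_R(k) = \ann_{E_S(k)} I = (0 :_{E_S} I)$ and base-changing along $R \to \R{e}$, express $K_e$ as the cyclic submodule of $\R{e} \otimes_R E_R(k)$ generated by $1 \otimes u$. (2) Apply Matlis duality over the regular local ring $S$ to convert length computations of these Artinian modules into colength computations of ideals in $S$. Under Matlis duality, the surjection $\R{e} \to K_e$ (sending $1 \mapsto 1 \otimes u$) dualizes to an inclusion, and the annihilator of $1 \otimes u$ in $\R{e} = S^{(e)}/I^{[q]}$ gets identified, after using flatness of Frobenius, with the ideal $\fn^{[q]} : (I^{[q]} : I)$ in $S$. (3) Compute $\length_{\R{e}}(K_e) = \length_S(K_e)$ (lengths agree since $\R{e}$ is module-finite over... actually one must be slightly careful: lengths over $\R{e}$ and over $S$ of the same module agree because $\R{e} = S^{(e)}$ has the same residue field $k$ after the Frobenius twist — this uses $S^{(e)}/\fn^{[q]}S^{(e)}$ has length $q^{\dim S}/[\text{something}]$, handled by Kunz's theorem, but for lengths of finite-length modules killed by $\fm^{[q]}$ the composition series lengths match). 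Then identify this length with $\length_S(S/(\fn^{[q]} : (I^{[q]} : I)))$, and finally note $\length_S(S/(\fn^{[q]} : J)) = \length_S((J + \fn^{[q]})/\fn^{[q]})$ for any ideal $J$ by self-duality of $S/\fn^{[q]}$ (it is a zero-dimensional Gorenstein, hence injective, over itself), with $J = I^{[q]} : I$.

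The hard part will be step (2), the precise bookkeeping identifying $\ann_{S^{(e)}/I^{[q]}}(1 \otimes u)$ — equivalently the Matlis dual of $K_e$ — with the ideal $\fn^{[q]} : (I^{[q]} : I)$. Concretely, one needs: the socle element $u \in E_R(k) \subseteq E_S(k)$ is (up to unit) the image of the socle generator of $E_S(k)$, which under $S$-duality corresponds to $1 \in S/\fn^{[q]}$ in the appropriate twisted picture; then $1 \otimes u \in S^{(e)} \otimes_S E_S(k)$ lives in the submodule cut out by $I$ and by $I^{[q]}$ acting through the twist, and its $S$-annihilator is computed by the colon-ideal manipulation $\fn^{[q]} : (I^{[q]} : I)$ — the inner colon $I^{[q]} : I$ arising precisely because $E_R(k) = (0 :_{E_S} I)$ and tensoring with the Frobenius twist replaces $\fn$ by $\fn^{[q]}$ and $I$ by $I^{[q]}$ in the relevant places while an extra $: I$ survives from the original $\ann I$ condition. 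I would verify this first in the case $I = 0$ (where it should reduce to $\length_S(S/\fn^{[q]}) = q^{\dim S}$, i.e. $\ta_e = 1$ for regular rings, recovering the earlier theorem) and then in the case where $I$ is a parameter ideal, to make sure the colon ideals are assembled in the right order, before writing the general argument.
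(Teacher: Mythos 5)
Your approach is genuinely different from the paper's, which sidesteps any direct computation: it passes to a faithfully flat local extension of $S$ (complete, then enlarge the residue field to its algebraic closure), under which both $\ta_e(R)$ and $\length_S\bigl(S/(\fn\brq : (I\brq:I))\bigr)$ are unchanged, reducing to the F-finite case; it then cites the identity established after Theorem~4.2 of~\cite{AE}, with the second equality being Matlis duality over the Artinian Gorenstein ring $S/\fn\brq$ as on page~11 of~\cite{AE}. A direct derivation over $S$ is a legitimate alternative in spirit, and the tools you name (the isomorphism $E_R(k) \cong (0:_{E_S(k)} I)$, flatness of Frobenius on the regular ring $S$, and self-duality of $S/\fn\brq$) are the right ones.

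However, the proposal has a genuine gap and, more seriously, a fatal misidentification at the start. You assert that $\R{e}$ as an $R$-module ``corresponds to $S^{(e)}/IS^{(e)} = S^{(e)}/I\brq$.'' What that computes is $R\otimes_S S^{(e)}$, which is indeed $S/I\brq$; but $\R{e}$ is $R=S/I$ with its twisted module structure, i.e.\ $\R{e}\cong S^{(e)}/I$ as $S$-modules, and the canonical surjection $S/I\brq \twoheadrightarrow S/I = \R{e}$ has nonzero kernel $I/I\brq$ whenever $I\ne I\brq$. Equivalently, the Frobenius functor of $S$ applied to an $R$-module is \emph{not} the Frobenius functor of $R$: $S^{(e)}\otimes_S M$ and $\R{e}\otimes_R M$ are different modules in general. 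The error is quantitative, not notational: take $S=k[x]_{(x)}$, $I=(x^2)$, $R=k[x]/(x^2)$, $q\ge 2$. Since $R$ is not reduced, $\ta_e(R)=0$, and indeed $I\brq:I=(x^{2q-2})$, $\fn\brq:(I\brq:I)=(x^q):(x^{2q-2})=S$, so the colength is $0$; with the correct $\R{e}=R$ one checks $1\otimes u=1\otimes\bar x=\bar x^{q}\otimes 1=0$ in $\R{e}\otimes_R E_R(k)$, so $K_e=0$. But with your identification $\R{e}=S/(x^{2q})$, the element $1\otimes u$ becomes $x^q\ne 0$ and the cyclic submodule it generates has length $q$. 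Beyond this, you explicitly defer the computation of $\ann(1\otimes u)$ --- the content of the proposition --- and propose to first ``verify this in the case $I=0$'' before writing the argument, so what you have is an outline with a wrong starting point rather than a proof.
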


\begin{proof}
We can make a faithfully flat extension of $S$ (and hence $R$) by first
completing and then enlarging the residue field of $S$ (and hence $R$) to
its algebraic closure. This flat local extension has its closed fiber
equal to a field. Note that the rings are F-finite after extension.

In \cite[Remark~2.3 (3)]{Y}, it is shown that the normalized Frobenius
splitting number is unchanged under such extensions. Also, $\length_S
\big(\frac{S}{\fn \brq :(I \brq :I)}\big)$ is not affected under
such an extension.   

So, it is enough to check the equality 
$\ta _e(R) \cdot q^{\dim(R)} = \length_S \big(\frac{S}{\fn \brq
    :(I \brq :I)}\big)$
under the additional hypothesis
that $R$ is F-finite and this is observed in~\cite{AE} (see the
remarks immediately after \cite[Theorem~4.2]{AE}). 
(The reader might be also interested in \cite{F, G}.)

Finally, the equality 
$\length _S \big(\frac{S}{\fn \brq :(I \brq :I)}\big) = 
\length _S\big( \frac{(I\brq :I) + \fn \brq}{\fn \brq}\big)$ holds
true by Matlis Duality as shown in~\cite[page 11]{AE}.  
\end{proof}

We will also need the following lemma. A proof is included for completeness.

\begin{Lma}
\label{filtration}
Let $R$ be a Noetherian ring, $P$ a prime ideal in $R$ and $L
\subseteq M$ be $R$-modules such that $M/L$ is finitely generated over
$R$. Assume that $\length_{R_P}(M_P/L_P)=n < \infty$.  

Then there exists $x \in R \setminus P$ and a filtration of $R_x$-modules
$$
L_x =M_0 \subseteq M_1 \subseteq \cdots \subseteq M_n =M_x
$$ 
such that $M_i/M_{i-1} \cong (R/P)_x$ for all $i =1, \ldots, n$.
\end{Lma}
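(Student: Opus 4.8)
The plan is to exploit the fact that a module of finite length over $R_P$ admits a composition series whose factors are all $(R/P)_P$, and then to "spread out'' both the series and the isomorphisms to a principal open neighborhood $D(x)$ of $P$. First I would choose a composition series
$$
L_P = N_0 \subseteq N_1 \subseteq \cdots \subseteq N_n = M_P
$$
of $R_P$-modules with each $N_i/N_{i-1} \cong (R/P)_P = \kappa(P)$, which exists because $\length_{R_P}(M_P/L_P) = n$. Each $N_i$ is a finitely generated $R_P$-submodule of $M_P$ (it lies between $L_P$ and $M_P$, and $M_P/L_P$ is a module of finite length, hence finitely generated; combined with the fact that $L_P$ need not be finitely generated, one works with the images in $M_P/L_P$ and pulls back). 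Writing each generator of $N_i$ as a fraction, there is a single $x_1 \in R \setminus P$ so that all the $N_i$ are defined already over $R_{x_1}$; that is, there are $R_{x_1}$-submodules $\widetilde N_i$ of $M_{x_1}$ (containing $L_{x_1}$) with $(\widetilde N_i)_P = N_i$, and by enlarging $x_1$ we may assume $\widetilde N_0 = L_{x_1}$ and $\widetilde N_n = M_{x_1}$ and $\widetilde N_{i-1} \subseteq \widetilde N_i$.

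The main work is to arrange, after possibly shrinking the neighborhood further, that each successive quotient $\widetilde N_i/\widetilde N_{i-1}$ is isomorphic to $(R/P)_{x}$ as an $R_x$-module. Fix $i$ and set $Q_i := \widetilde N_i / \widetilde N_{i-1}$, a finitely generated $R_{x_1}$-module with $(Q_i)_P \cong \kappa(P)$. Since $(Q_i)_P$ is killed by $P R_P$ and is cyclic over $R_P$, there is an element $m_i \in Q_i$ whose image generates $(Q_i)_P$, giving a surjection $R_{x_1} \to Q_i$ after inverting one more element; composing with this surjection, $P \cdot (Q_i)_P = 0$ means that each of the finitely many generators of $P$, times $m_i$, dies after inverting some element not in $P$. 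Choosing $x$ to be a product of $x_1$ with all these finitely many elements (over all $i$) at once, we get that over $R_x$ each $Q_i$ is a cyclic module annihilated by $P R_x$, i.e.\ a cyclic $(R/P)_x$-module, so $Q_i \cong (R/P)_x / J_i$ for some ideal $J_i$ of $(R/P)_x$; but $(Q_i)_P \cong \kappa(P)$ forces $(J_i)_P = 0$, hence (shrinking $x$ once more, using that $(R/P)_x$ is Noetherian and $J_i$ is finitely generated) $J_i = 0$. Then $M_i := \widetilde N_i$ over this final $R_x$ gives the desired filtration with $M_i/M_{i-1} \cong (R/P)_x$.

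The step I expect to be the main obstacle is the bookkeeping in passing from "$(Q_i)_P \cong \kappa(P)$'' to "$Q_i \cong (R/P)_x$ over a common $x$'': one must handle the generator, the annihilator, and the kernel of the candidate surjection simultaneously and for all $n$ of the quotients, and one must be careful that $L$ itself is not assumed finitely generated so that the filtration steps $\widetilde N_i$ are handled via their images in the finitely generated module $M/L$ rather than directly. All of these are standard "spreading out'' arguments — each uses that a finite set of local vanishings can be achieved by inverting a single element of $R \setminus P$, and that a finitely generated module vanishing at $P$ vanishes on a neighborhood — but assembling them into one $x$ requires care. Everything else is routine, using only that localization is exact and commutes with Hom for finitely presented modules.
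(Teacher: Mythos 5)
Your proposal is correct in outline, but it takes a genuinely different route from the paper. You work ``top-down'': localize at $P$, take a composition series of $M_P/L_P$ over $R_P$ (all factors $\cong\kappa(P)$), and then spread out both the submodules and the isomorphisms of the successive quotients with $\kappa(P)$ to a principal open $D(x)$. The paper works ``bottom-up'': it takes a prime filtration of the finitely generated $R$-module $M/L$ over $R$ itself, so $N_i/N_{i-1}\cong R/Q_i$ with $Q_i\in\Spec(R)$, and then inverts a single element $x$ (a product of one $x_i\in Q_i\setminus P$ for each index with $Q_i\not\subseteq P$) to annihilate exactly those factors; the finite-length hypothesis on $(M/L)_P$ then forces every surviving $Q_j$ to equal $P$, and a length count shows there are exactly $n$ of them. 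The paper's approach buys you a shortcut: because the prime filtration is already defined over $R$, there is no need to spread out isomorphisms --- the identification of each surviving factor with $(R/P)_x$ is automatic, and the only ``spreading out'' needed is the trivial step of collapsing the factors supported off $P$. Your approach is more work precisely at the point you flag as the main obstacle (getting $Q_i\cong (R/P)_x$ uniformly): you need to choose cyclic generators, kill $P$, and kill the kernel ideal $J_i$ (though, since $(R/P)_x$ is a domain and $(J_i)_P=0$ already forces $J_i=0$, the last shrinking step is actually unnecessary). Both proofs are valid; the prime-filtration-over-$R$ approach is the more economical one here.
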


\begin{proof}
Let $L= N_0 \subseteq N_1\subseteq \cdots \subseteq N_r =M$ be a prime
filtration of $L \subseteq M$ such that $N_i/N_{i-1} \cong R/Q_i$,
where $Q_i \in \Spec(R)$ for $i=1,\dotsc,r$.

For each of the indices $i$ such that $Q_i \not\subseteq P$, choose
$x_i \in Q_i$ but not in $P$. Let $x$ be the product of all
these elements $x_i$. 
Tensoring the original filtration with $R_x$, we get $(N_i/N_{i-1})_x
\cong (R/Q_i)_x =0$ for all these $i$ (such that $Q_i \not\subseteq P$). 

For the remaining indices $j$ (so that $Q_j \subseteq P$), since
$(M/L)_P$ has finite length over $R_P$, we conclude that
$\length_{R_P}((R/Q_j)_P) < \infty$, which forces $Q_j = P$ in this
case.  Moreover, we see that there are precisely $n$ many indices $j$
such that $Q_j = P$.

Thus, by tensoring 
$L= N_0 \subseteq N_1\subseteq \cdots \subseteq N_r =M$ with $R_x$,
removing the repeated terms and relabeling everything properly, we
obtain a required filtration over $R_x$.  
\end{proof}

\begin{Lma}
\label{koszul}
Let $A \to B$ be a flat homomorphism of Noetherian rings and consider
$$M_0 \subseteq M_1 \subseteq \cdots \subseteq M_n,$$
which is a filtration of $A$-modules. Write $M_i/M_{i-1} = N_i$ for $i = 1,
\dotsc, n$.
Assume that $\mx$ is a sequence in $B$ that is regular on $N_i
\otimes_A B$ for all $i =2, \ldots, n$.  

\begin{enumerate}
\item there is the following natural isomorphism
$$\frac{M_1\otimes B + (\mx) (M_n \otimes B)}{M_0\otimes B +(\mx) 
(M_n \otimes B)}   \cong 
\frac{M_1}{M_0} \otimes \frac{B}{ (\mx) B}.$$

\item assume furthermore that 
$\length _B\left(\frac{N_i \otimes_A B}{(\mx)(N_i \otimes_A B)}\right) 
< \infty$ for $i =1,\dotsc, n$.  Then
$$
\length _B \left(\frac{M_n}{M_0} \otimes_A \frac{B}{(\mx)B}\right) 
= \sum_{i=1}^n \length _B \left(N_i \otimes_A \frac{B}{(\mx)B}\right).
$$ 
\end{enumerate}
\end{Lma}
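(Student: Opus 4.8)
The plan is to prove (1) first, then derive (2) from (1) by induction on $n$, together with a short exact sequence argument and the usual behavior of length under flat base change and Koszul homology. For (1), the key observation is that the hypothesis ``$\mx$ is regular on $N_i\otimes_A B$ for $i=2,\dots,n$'' is exactly what is needed to make tensoring the filtration $M_0\subseteq M_1\subseteq\dots\subseteq M_n$ with $B/(\mx)B$ behave well \emph{above} the bottom step. Concretely, I would first note that $-\otimes_A B$ is exact, so $M_i\otimes_A B$ is a filtration of $B$-modules with quotients $N_i\otimes_A B$. Then I would pass to $B/(\mx)B$ and chase the resulting diagram: since $\mx$ is $(N_i\otimes_A B)$-regular for $i\ge 2$, the Koszul complex of $\mx$ on each $N_i\otimes_A B$ ($i\ge 2$) has homology concentrated in degree $0$, which gives that $\operatorname{Tor}_j^B(N_i\otimes_A B, B/(\mx)B)$ vanishes in the relevant range for $i\ge 2$. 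Feeding this into the long exact sequences coming from $0\to M_{i-1}\otimes B\to M_i\otimes B\to N_i\otimes B\to 0$, one gets that $(\mx)(M_i\otimes B)\cap (M_{i-1}\otimes B) = (\mx)(M_{i-1}\otimes B)$ for $i\ge 2$, and by induction that $M_1\otimes B + (\mx)(M_n\otimes B)$ meets $M_0\otimes B + (\mx)(M_n\otimes B)$ in exactly the latter together with $(\mx)$-multiples inside $M_1$. Quotienting then collapses the whole tail of the filtration and leaves precisely $(M_1/M_0)\otimes B/(\mx)B = (M_1/M_0)\otimes_A (B/(\mx)B)$ (using that $(M_1/M_0)\otimes_A B \otimes_B B/(\mx)B = (M_1/M_0)\otimes_A B/(\mx)B$), which is the claimed isomorphism.

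For (2), I would induct on $n$. The case $n=1$ is trivial since $M_n/M_0 = N_1$. For the inductive step, apply (1) to the sub-filtration $M_0\subseteq M_1\subseteq\dots\subseteq M_n$ to identify the ``first layer'' contribution, and apply the inductive hypothesis to the shifted filtration $M_1\subseteq M_2\subseteq\dots\subseteq M_n$ (whose regularity hypotheses are inherited). The remaining point is additivity of length along the short exact sequence
$$0\to \frac{M_1}{M_0}\otimes_A\frac{B}{(\mx)B}\to \frac{M_n}{M_0}\otimes_A\frac{B}{(\mx)B}\to \frac{M_n}{M_1}\otimes_A\frac{B}{(\mx)B}\to 0,$$
where exactness on the left is again guaranteed by the vanishing of $\operatorname{Tor}_1^B\big((M_n/M_1)\otimes_A B,\,B/(\mx)B\big)$, which follows from the $\mx$-regularity on the higher $N_i$'s (and a dévissage along the filtration of $M_n/M_1$). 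Each term has finite length by the hypothesis in (2) together with the finite-length bound on the layers, so additivity of length gives the stated formula.

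The main obstacle I expect is the bookkeeping in step (1): carefully verifying that the submodule intersections $(\mx)(M_n\otimes B)\cap(M_i\otimes B)$ descend correctly through the filtration, i.e.\ that no ``extra'' elements appear when one takes $(\mx)$-multiples in the big module $M_n\otimes B$ and intersects back down to $M_i\otimes B$. This is precisely where the regularity of $\mx$ on the quotients $N_i\otimes_A B$ (for $i\ge 2$) is used, and it should be packaged as the vanishing statement $\operatorname{Tor}_1^B(N_i\otimes_A B, B/(\mx)B)=0$ for $i\ge 2$, unwound via the Koszul complex; the flatness of $A\to B$ is what allows one to start from a filtration of $A$-modules and still get a filtration of $B$-modules with the ``expected'' quotients. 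Everything else — flat base change for $\otimes$, additivity of length on short exact sequences, and the induction — is routine.
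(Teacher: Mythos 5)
Your proposal is correct and follows essentially the same route as the paper: reduce to $A=B$ by flatness, use regularity of $\mx$ on the higher layers $N_i$ ($i\ge 2$) to show that tensoring the filtration with $B/(\mx)B$ preserves left-exactness (the paper does this in one stroke by noting $\mx$ is regular on $M_n/M_1$ and taking the single short exact sequence $0\to M_1/M_0\to M_n/M_0\to M_n/M_1\to 0$, while you climb the filtration step by step and then induct for (2), but these are the same argument repackaged), and finish with the natural identification $M\otimes B/(\mx)B\cong M/(\mx)M$ and additivity of length. One small imprecision worth flagging: you appeal to vanishing of $\operatorname{Tor}^B_1\bigl(N_i\otimes_A B,\,B/(\mx)B\bigr)$, but the lemma does not assume $\mx$ is a regular sequence on $B$ itself, so the Koszul complex on $B$ need not resolve $B/(\mx)B$; the correct and hypothesis-free device is vanishing of the Koszul homology $H_1(\mx,\,N_i\otimes_A B)$ (which is what the paper invokes), and that suffices for exactly the long-exact-sequence argument you sketch.
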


\begin{proof}
Since $A \to B$ is a flat ring homomorphism, the hypotheses on the
given filtration of $A$-modules are stable under the scalar extension
along $A \to B$. Thus, without loss of generality, we assume $A=B$.
And we choose to use $A$ when presenting the proof.

(1) Note that $\mx$ is a regular sequence on $M_n/M_1$ 
by a routine short exact sequence argument. 
Next, consider the short exact sequence
$$0 \longrightarrow M_1/M_0 \longrightarrow M_n/M_0 \longrightarrow  
M_n/M_1 \longrightarrow 0.$$ 
Tensoring with $A/(\mx)A$, we get
$$0\longrightarrow \frac{M_1}{M_0} \otimes \frac{A}{(\mx)A} 
\longrightarrow \frac{M_n}{M_0} \otimes \frac{A}{(\mx)A} 
\longrightarrow \frac{M_n}{M_1}\otimes \frac{A}{(\mx)A} 
\longrightarrow 0,$$
since the Koszul homology $H_1\big(\mx, \frac{M_n}{M_1}\big)=0$.  
But also note the natural isomorphisms
\[
\frac{M_n}{M_0} \otimes \frac{A}{(\mx)A} \cong
\frac{M_n}{M_0+(\mx)M_n} \quad \text{and} \quad  
\frac{M_n}{M_1} \otimes \frac{A}{(\mx)A} \cong \frac{M_n}{M_1+ (\mx)M_n}.
\]
So using these natural isomorphisms, one sees $\frac{M_1}{M_0} \otimes
\frac{A}{(\mx)A} \cong \frac {M_1+ (\mx)M_n}{M_0+(\mx)M_n}$, which
finishes the proof of this part. 
 
(2) Consider a piece of the filtration, which gives a short exact
    sequence 
$$0 \longrightarrow \frac{M_{i-1}}{M_0} \longrightarrow 
\frac{M_i}{M_0} \longrightarrow \frac{M_{i}}{M_{i-1}} \longrightarrow 0, 
\quad i = 2, \dotsc, n.$$ 
Note that since $\mx$ are a regular sequence on $N_i$, we
get that the Koszul homology $H_1(\mx, N_i)=0$. Applying the
long exact sequence of Koszul homology with respect to $\mx$, we get
an exact sequence as follows 
$$0 \longrightarrow \frac{M_{i-1}}{M_0} \otimes_A \frac{A}{(\mx)A} 
\longrightarrow  \frac{M_i}{M_0} \otimes_A \frac{A}{(\mx)A} 
\longrightarrow  \frac{M_{i}}{M_{i-1}} \otimes_A \frac{A}{(\mx)A}
\longrightarrow  0.$$ 

Using the additivity of the length function and summing over all such
short exact sequences for $2=1,\ldots, n$, we obtain that 
\[
\length_A \left(\frac{M_{n}}{M_0} \otimes \frac{A}{(\mx)A}\right) 
= \sum_{i=1}^n \length _A \left(\frac{M_{i}}{M_{i-1}} \otimes
  \frac{A}{(\mx)A}\right) = \sum_{i=1}^n \length_A
\left(N_i \otimes \frac{A}{(\mx)A}\right). 
\qedhere
\]
\end{proof}

\begin{Thm}
\label{hom}
Let $R$ be a homomorphic image of a regular ring of prime
characteristic $p$ and assume that $R$ is excellent and locally
equidimensional.  

Then the normalized Frobenius splitting numbers are lower
semicontinuous, i.e., $s_e$ is lower semicontinuous for every $e \ge 0$. 
\end{Thm}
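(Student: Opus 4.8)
The plan is to fix $e\ge 0$ and a real number $r$, and to prove that $U_r:=\{P\in\Spec(R):s_e(P)>r\}$ is open by verifying the two conditions of Nagata's criterion (Proposition~\ref{nagata}). Write $R=S/I$ with $S$ regular, put $J:=I\brq:I$, an ideal of $S$, and for a prime $\fp$ of $S$ containing $I$ set $P'=\fp/I$. Localizing Proposition~\ref{rewrite} at $\fp$ shows that the nonnegative integer
$$
n_{\fp} := \length_{S_{\fp}}\!\left(\frac{J_{\fp}+\fp\brq S_{\fp}}{\fp\brq S_{\fp}}\right)
$$
equals $s_e(P')\cdot q^{\dim(R_{P'})}$, so everything reduces to understanding how $n_{\fp}$ varies along an inclusion $\fq\subseteq\fp$ of primes of $S$ containing $I$. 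Here the hypotheses on $R$ enter first in a soft way: since $R$ is excellent it is catenary, and since $R$ is locally equidimensional the height formula recalled in Section~2 is available inside $R_{P'}$, giving $\dim(R_{P'})=\dim(R_{Q'})+c$ with $Q'=\fq/I$ and $c:=\height(\fp/\fq)$. Hence $s_e(Q')\ge s_e(P')$ is \emph{equivalent} to $n_{\fp}\le q^{c}n_{\fq}$, and it suffices to prove: (A) $n_{\fp}\le q^{c}n_{\fq}$ whenever $\fq\subseteq\fp$ are primes containing $I$ --- this is exactly condition (i) of Proposition~\ref{nagata} for $U_r$ --- together with (B) for every prime $\fq\supseteq I$ there is $x\in S\setminus\fq$ with $n_{\fp}=q^{c}n_{\fq}$ for all $\fp\in D(x)\cap V(\fq)$, which, applied with $\fq$ the preimage of a given $P\in U_r$, yields condition (ii).

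The engine for (B) is the interplay of Lemmas~\ref{filtration} and~\ref{koszul}. Applying Lemma~\ref{filtration} to the inclusion $J\cap\fq\brq\subseteq J$ of ideals of $S$ (its cokernel is finitely generated and has finite length $n_{\fq}$ after localizing at $\fq$) produces $x_1\in S\setminus\fq$ and a filtration of $S_{x_1}$-modules from $(J\cap\fq\brq)_{x_1}$ up to $J_{x_1}$ with $n_{\fq}$ successive quotients, each isomorphic to $(S/\fq)_{x_1}$. Now comes the essential use of the hypotheses: $S/\fq$ is a domain and a homomorphic image of $R$, hence excellent, so its regular locus is open and dense; choose $x_2\in S\setminus\fq$ so that $(S/\fq)_{\fp}=S_{\fp}/\fq S_{\fp}$ is a regular local ring for every $\fp\in D(x_2)\cap V(\fq)$. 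For such $\fp$ the ideal $\fq S_{\fp}$ is generated by a part $w_1,\dots,w_h$ of a regular system of parameters $w_1,\dots,w_h,z_1,\dots,z_c$ of $S_{\fp}$; using that $(\fa+\fb)\brq=\fa\brq+\fb\brq$ in characteristic $p$, one gets $\fq\brq S_{\fp}=(w_1^{q},\dots,w_h^{q})$ and $\fp\brq S_{\fp}=(w_1^{q},\dots,w_h^{q},z_1^{q},\dots,z_c^{q})$, while $\mx:=z_1^{q},\dots,z_c^{q}$ is a regular sequence on $(S/\fq)_{\fp}$ whose quotient there has length $q^{c}$. Localizing the filtration at $\fp\in D(x_1x_2)\cap V(\fq)$ and invoking Lemma~\ref{koszul}(2) --- with the identity homomorphism $S_{\fp}\to S_{\fp}$ and the sequence $\mx$, whose hypotheses are exactly the assertions just made --- gives $\length_{S_{\fp}}\!\bigl((J_{\fp}/(J\cap\fq\brq)_{\fp})\otimes_{S_{\fp}}S_{\fp}/(\mx)\bigr)=q^{c}n_{\fq}$. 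Since $n_{\fp}$ is the length of the image of $J_{\fp}$ in $S_{\fp}/\fp\brq S_{\fp}\cong\bigl(S_{\fp}/\fq\brq S_{\fp}\bigr)/(\mx)$, which is a homomorphic image of the module just measured, we get $n_{\fp}\le q^{c}n_{\fq}$; and equality holds as soon as the first Koszul homology $H_1(\mx;S_{\fp}/(J_{\fp}+\fq\brq S_{\fp}))$ vanishes --- which is automatic wherever $J\not\subseteq\fp$ and, in general, holds on a further dense open subset of $V(\fq)$. Shrinking $D(x_2)$ accordingly proves (B). A parallel estimate, which one must carry out without the luxury of passing to the regular locus, is needed for the inequality (A) for all comparable primes.

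I expect the main obstacle to be precisely this length bookkeeping: reconciling the Frobenius power $\fp\brq S_{\fp}$ with the decomposition $\fq\brq S_{\fp}+(\mx)$ so that Lemma~\ref{koszul} applies with the correct normalizing factor $q^{c}=q^{\dim(R_{P'})-\dim(R_{Q'})}$. On the regular locus of $S/\fq$ this is clean, because a regular system of parameters of $S_{\fp}$ restricts to one of $S_{\fp}/\fq S_{\fp}$; off that locus one has to work harder, and establishing condition (i) of Nagata's criterion in full generality --- that is, $n_{\fp}\le q^{c}n_{\fq}$ for \emph{all} $\fq\subseteq\fp$, not merely for $\fp$ generic on $V(\fq)$ --- is the more delicate half, since one can no longer split the Frobenius power so conveniently and must instead bound the Koszul discrepancy term by other means.
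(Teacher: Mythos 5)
Your outline follows the paper's strategy quite closely --- the same reduction via Proposition~\ref{rewrite}, the same use of Lemma~\ref{filtration} to produce a prime filtration with quotients $(S/\fq)_x$, the same passage to the regular locus of $S/\fq$ and use of Lemma~\ref{koszul} to conclude $n_\fp = q^c n_\fq$ generically on $V(\fq)$, and the same finish via Nagata's criterion. But there is a genuine gap, which you yourself half-flag, at condition~(i) of Proposition~\ref{nagata}: you need $s_e(Q') \ge s_e(P')$, i.e.\ $n_\fp \le q^c n_\fq$, for \emph{every} inclusion $\fq \subseteq \fp$ of primes containing $I$, not just for $\fp$ on the regular locus of $S/\fq$. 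Your surjectivity argument producing $n_\fp \le q^c n_\fq$ hinges on choosing $\mx = z_1^q,\dots,z_c^q$ so that $\fp\brq S_\fp = \fq\brq S_\fp + (\mx)$, and this decomposition is precisely what becomes unavailable when $(S/\fq)_\fp$ is singular. You end the proposal acknowledging this is ``the more delicate half'' but do not supply an argument; the paper does not supply one either, it invokes a previously established result, namely \cite[Proposition~5.2]{Y}, which shows that normalized Frobenius splitting numbers can only increase under localization in a locally equidimensional ring. Without that ingredient (or an independent proof of it), the proposal does not close.

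A secondary, but fixable, issue is the Koszul-vanishing step needed to upgrade your inequality to equality in part (B). You observe that equality requires $H_1(\mx; S_\fp/(J_\fp + \fq\brq S_\fp)) = 0$ and assert this holds ``on a further dense open subset of $V(\fq)$,'' but you do not justify it. The paper handles this cleanly by applying Lemma~\ref{filtration} to the \emph{two-step} chain $\fq\brq \subseteq K+\fq\brq \subseteq S$, so that the resulting filtration continues past $M_m$ all the way to $M_{m+n}=S_x$ with every quotient isomorphic to $(S/\fq)_x$; this is exactly the hypothesis required by Lemma~\ref{koszul}(1), which then produces the desired isomorphism with no extra shrinking needed. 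If you worked with the filtration of $(J + \fq\brq)/\fq\brq \cong J/(J \cap \fq\brq)$ alone, you would have to re-derive the regularity of $\mx$ on $S_\fp/(K+\fq\brq)_\fp$ by the same device, so you may as well set it up the way the paper does from the start.
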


\begin{proof}
Using the notations introduced at the beginning of the section we let
$R=S/I$. Let $\overline{P} =P/I$ be a prime ideal of $R$, in which $P$
is a prime ideal of $S$ containing $I$. Quite generally, denote
$\overline Q : = Q/I$ for all $Q \in \Spec(S) \cap V(I)$. 
Let $K = (I \brq :_S I)$, an ideal in $S$.

Let $m := \length _{S_P}
\big(\big(\frac{K+P\brq}{P\brq}\big)_P\big)$. Then
Proposition~\ref{rewrite} allows us to write  
$$\ta_e(\overline{P}) = \length _{S_P}
\left(\left(\frac{K+P\brq}{P\brq}\right)_P\right) 
\cdot \frac{1}{q^{\height(P/I)}} = m \cdot \frac{1}{q^{\height(P/I)}}.$$ 

Consider ${P\brq} \subseteq {K+P\brq} \subseteq {S}$. Applying
Lemma~\ref{filtration} to ${P\brq} \subseteq {K+P\brq}$ and ${K+P\brq}
\subseteq {S}$, we can find $x$ in $S \setminus P$ and a filtration  
$${P\brq}_x=M_0 \subseteq M_1 \subseteq \cdots \subseteq M_m \subseteq
M_{m+1} \subseteq \cdots \subseteq M_{m+n} =S_x$$ 
such that $M_m = ({K+P\brq})_x$ and $M_{i}/M_{i-1} \cong
(S/P)_x$ for all $i =1, \ldots, m+n$. 

Since $R$ is excellent, the regular locus of $S/P$ is open (and
non-empty), so there 
exists $y \notin P$ such that $R/\overline{P} = S/P$ becomes regular
when localizing at $y$. We can replace $x$ by $xy$ and hence simply
assume that $(S/P)_x$ is in fact regular. 

Consider any $Q \in D(x) \subseteq \Spec(S)$ such that $P \subseteq
Q$. Since $(S/P)_x$ is regular we obtain that $(S/P)_Q$ is regular as
well. Let us choose  $\mx = x_1, \ldots, x_d$ in $S$ 
such that their images in $(S/P)_Q$ form a regular system of
parameters (of the regular local ring $(S/P)_Q$). More explicitly, we
have $Q_Q = (P+(\mx))_Q$ and $d =
\dim((R/P)_Q) = \height(Q/P)$. 

Note that Proposition~\ref{rewrite} also allows us to write
$$\ta_e (Q/I) = \length_{S_Q}
\left(\left(\frac{K+Q\brq}{Q\brq}\right)_Q\right) \cdot
        \frac{1}{q^{\height(Q/I)}} 
= \length_{S_Q} \left(\left(
\frac{K+(P + (\mx)) \brq}{P\brq+ (\mx)\brq}\right)_Q\right) 
\cdot \frac{1}{q^{\height(Q/I)}}.$$ 

Remember that $m = \length _{S_P}
\big(\big(\frac{K+P\brq}{P\brq}\big)_P\big)$ and
$d = \height (Q/P)$, which equals $\Dim(R_{\overline Q}) -
\Dim(R_{\overline P})$
since $R$ is locally equidimensional. 

Our plan is to show that
\[ \length_{S_Q} \left(\left(
\frac{K+(P\brq +(\mx) \brq)}{P\brq+ (\mx) \brq}\right)_Q\right)  
= m \cdot 
\length_{S_Q} \left(\left(\frac{S}{P+(\mx)\brq}\right)_Q\right) 
= m \cdot q^d.
\tag{$*$}
\] 
This will show that $\ta_e(Q/I) = \ta_e(P/I)$.

In order to prove our claim let us return to the filtration we
considered above 
$${P\brq}_x=M_0 \subseteq M_1 \subseteq \cdots \subseteq M_m \subseteq 
M_{m+1} \subseteq \cdots \subseteq M_{m+n} =S_x,$$ 
in which $M_m = ({K+P\brq})_x$ and $M_{i}/M_{i-1} \cong
(S/P)_x$ for all $i =1, \ldots, m+n$. 

Applying Lemma~\ref{koszul} (1) to the filtration $M_0 \subseteq M_m
\subseteq \cdots \subseteq M_{m+n}$ with $A=S_x \to B=S_Q$ and the
sequence $\mx \brq:= x_1^q, \dotsc, x_d^q$ which is regular on
$(S/P)_Q$, we get 
\[
\left(\frac{(K+P\brq)  +(\mx) \brq}{P\brq+ (\mx) \brq}\right)_Q \cong 
\frac{({K+P\brq})_x}{{P\brq}_x} \otimes \frac{S_Q}{ (\mx)\brq S_Q}.
\tag{$\dagger$}
\]

Then, applying Lemma~\ref{koszul} (2) to the filtration $M_0 \subseteq
M_1 \subseteq \dotsb \subseteq M_{m}$, we see
\[
\length_{S_Q} \left(\tfrac{({K+P\brq})_x}{{P\brq}_x} \otimes
  \tfrac{S_Q}{ (\mx)\brq S_Q}\right) 
=\sum_{i=1}^m \length_{S_Q} \left( \left(\tfrac S P \right)_x \otimes
  \tfrac{S_Q}{ (\mx)\brq S_Q} \right) 
= m \cdot \length_{S_Q} \left(\tfrac{S_Q}{(P+ (\mx)\brq) S_Q} \right).
\]

Since $\mx$ are a regular system of parameters on $(S/P)_Q$, we deduce
$\length_{S_Q} \left(\frac{S_Q}{(P+(\mx)\brq) S_Q} \right) =
q^d = q^{\height(Q/P)}$  and hence
\[
\length_{S_Q} \left(\frac{({K+P\brq})_x}{{P\brq}_x} \otimes
  \frac{S_Q}{ (\mx)\brq S_Q}\right) = m \cdot q^d 
= m \cdot q^{\height(Q/P)}. 
\tag{$\ddagger$}
\]

Combining $(\dagger)$ and $(\ddagger)$, we see our claim $(*)$ that 
$\length_{S_Q} \Big(\big(
\frac{K+(P\brq +(\mx) \brq)}{P\brq+ (\mx) \brq}\big)_Q\Big)  
= m \cdot q^{\height(Q/P)}$, which then implies
\begin{align*}
s_e(Q/I) &= \length_{S_Q} \left(\left(
\frac{K+(P + (\mx)) \brq}{P\brq+ (\mx)\brq}\right)_Q\right) 
\cdot \frac{1}{q^{\height(Q/I)}} \\
&= m \cdot q^{\height(Q/P)} \cdot \frac{1}{q^{\height(Q/I)}}
= \length_{S_P}\left(\left(\frac{K+P\brq}{P\brq}\right)_P\right) 
\cdot \frac{1}{q^{\height(P/I)}} = s_e(P/I).
\end{align*}

In summary, 
for any given prime ideal $P/I$ of
$R$ (i.e., $P \in V(I) \subseteq \Spec(S)$), there is $x \in S
\setminus P$ such that $\ta_q(Q/I) = \ta_e(P/I)$ for all $Q \in D(x)
\cap V(P)$. 

Now, to prove the lower semicontinuity of $s_e : \Spec(R) \to \mathbb
Q$, let $r \in \mathbb R$ 
be a number and let $U = \{ \overline{Q} : \ta_e(\overline{Q}) > r \}
\subseteq \Spec(R).$  To show that $U$ is open it is enough to apply
Proposition~\ref{nagata}. 

Part (i) in Proposition~\ref{nagata} is satisfied by \cite
[Proposition~5.2] {Y} where it is shown that the normalized Frobenius
splitting numbers can only increase by localization over a 
locally equidimensional ring. 

Part (ii) follows from what we just proved earlier: For any $\overline
P = P/I \in U$, the work right above shows $
D(\overline{x}) \cap V(\overline P) \subseteq U$, in which $x \in S
\setminus P$ is as obtained above and $\overline{x}$ denotes the image
of $x$ in $R$. This completes the proof.
\end{proof}

\begin{Rem} \label{rmk-hom}
The reader should note that under the conditions of Theorem~\ref{hom},
the same proof shows that the set $X_{\geq r} = \{ Q \in \Spec(R) :
\ta_e( Q) \geq r \}$ is open for all $r \in \mathbb R$.
\end{Rem}

\section{Gorenstein excellent rings}

We state the following lemma about Gorenstein rings. 

\begin{Lma}
\label{description}
Let $(R, \fm, k)$ be a local Gorenstein ring of prime positive
characteristic $p$ and dimension $d$. Let $\mx =x_1, \ldots, x_d$ be
any system of parameters for $R$. 

Then, for any $u \in R$ such that
its image generates the socle of $R/(\mx)$, we have
$$
\ta_e (R) \cdot q^d = \length \left(\frac{Ru^q+ (\mx) \brq}{(\mx)
    \brq}\right) \quad \text{for all} \quad e \ge 0.
$$ 
(When $\dim(R) = 0$, we agree that $(\mx) = 0$ by convention.)
\end{Lma}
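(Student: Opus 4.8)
The plan is to unwind the definition of $\ta_e(R)$ in terms of $E=E_R(k)$ and then use the Gorenstein hypothesis to realize $E$ concretely as a direct limit of quotients by powers of $\mx$.

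First I would recall, from the discussion immediately preceding Definition~\ref{def}, that $\ta_e(R)\cdot q^d=\length_{\R{e}}(K_e)$, where $K_e$ is the $\R{e}$-submodule of $\R{e}\tensor_R E$ generated by $1\tensor u$ and $u$ is a socle generator of $E$. Since $R$ is Gorenstein of dimension $d$, it is Cohen--Macaulay, so $\mx=x_1,\dots,x_d$ is a regular sequence and there is an identification of $R$-modules
$$E\;\cong\;H^d_{\fm}(R)\;\cong\;\varinjlim_t\,R/(x_1^t,\dots,x_d^t),$$
where the transition maps are multiplication by $x_1\cdots x_d$ and, because $\mx$ is a regular sequence, every structure map $R/(x_1^t,\dots,x_d^t)\to H^d_\fm(R)$ is injective. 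Under this identification the socle of $E$ is the image of the socle of $R/(\mx)$ (the $t=1$ stage), and since $R/(\mx)$ is Gorenstein Artinian its socle is one-dimensional; hence we may and do take the socle generator $u$ of $E$ to be the image of the given element $u\in R$. (When $d=0$ this reads $(\mx)=0$ and $E\cong R$, and the argument below degenerates correctly, matching the stated convention.)

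Next I would apply $\R{e}\tensor_R-$. Since tensor products commute with direct limits, $\R{e}\tensor_R E\cong\varinjlim_t\bigl(\R{e}\tensor_R R/(x_1^t,\dots,x_d^t)\bigr)$. The key point is that, because the $R$-module structure on $\R{e}$ is twisted by the Frobenius ($r\cdot s=r^q s$), the ideal $(x_1^t,\dots,x_d^t)\R{e}$ is $(x_1^{tq},\dots,x_d^{tq})$ as an ideal of the ring $\R{e}=R$; hence $\R{e}\tensor_R R/(x_1^t,\dots,x_d^t)\cong R/(x_1^t,\dots,x_d^t)\brq$ as $\R{e}$-modules, and the transition maps become multiplication by $(x_1\cdots x_d)^q$. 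This is precisely a cofinal subsystem of the standard direct system computing $H^d_\fm(R)$, so $\R{e}\tensor_R E\cong H^d_\fm(R)$ again; moreover $x_1^q,\dots,x_d^q$ is again a regular sequence (Cohen--Macaulayness), so all structure maps of this subsystem are injective, and in particular $R/(\mx)\brq\into\R{e}\tensor_R E$. Finally I would track the generator: $1\tensor u$ is the image of $1\tensor\bar u\in\R{e}\tensor_R R/(\mx)=R/(\mx)\brq$, and, using once more that scalars from $R$ act on $\R{e}$ through the Frobenius, $1\tensor\bar u=1\tensor(u\cdot\bar 1)=u^q\tensor\bar 1=u^q+(\mx)\brq$. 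Therefore $K_e$, being the $\R{e}$-submodule generated by this element inside $R/(\mx)\brq$, equals $\bigl(Ru^q+(\mx)\brq\bigr)/(\mx)\brq$, whence
$$\ta_e(R)\cdot q^d=\length_{\R{e}}(K_e)=\length\!\left(\frac{Ru^q+(\mx)\brq}{(\mx)\brq}\right),$$
as claimed.

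I expect the main obstacle to be purely careful bookkeeping with the Frobenius-twisted module structure on $\R{e}$: getting $(x_1^t,\dots,x_d^t)\R{e}=(x_1^{tq},\dots,x_d^{tq})$ correctly, and especially the identity $1\tensor\bar u=u^q+(\mx)\brq$, which is exactly what produces the $q$-th power in the statement. The secondary point, where the Gorenstein/Cohen--Macaulay hypothesis is genuinely used, is the identification of $\R{e}\tensor_R E$ with $H^d_\fm(R)$ through an injective cofinal subsystem, which is what lets us compute $K_e$ inside the honest quotient $R/(\mx)\brq$ rather than in the colimit. No estimates or limiting arguments are needed.
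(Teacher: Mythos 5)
Your proof is correct and follows essentially the same route as the paper's: realize $E$ as the direct limit of $R/(\mx)^{[n]}$ (possible since $R$ is Gorenstein), apply $\R{e}\otimes_R-$ using that the twisted structure turns $(\mx)^{[n]}\R{e}$ into $(\mx)^{[nq]}$, observe that regularity of $\mx$ makes all structure maps injective so that $R/(\mx)\brq$ embeds in $\R{e}\otimes_R E$, and track the generator $1\otimes u\mapsto u^q+(\mx)\brq$. The only cosmetic difference is that you compute $K_e$ directly as the cyclic $\R{e}$-submodule generated by $1\otimes u$, whereas the paper computes $\im(1\otimes\psi)$ by lifting $\psi$ to the filtered pieces; these are the same calculation.
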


\begin{proof}
Note that the statement is clear when $\dim(R) = 0$. 
(In fact, the following proof also works for the case of
$\dim(R) = 0$ if we agree that $x = 1$ and $(\mx)^{[n]} = 0$
for all $n \ge 0$.) 

Denote $x:= \prod_{i=1}^d x_i$ and 
$(\mx)^{[n]}: = (x_1^n, \dotsc, x_d^n)$ for all $n \ge 0$. 
As $R$ is Gorenstein, the injective hull of $k = R/\fm$ can be
obtained as  
\[
E=E_R(k) = \varinjlim
\left\{\frac{R}{(\mx)} \overset{\cdot x}{\longrightarrow} 
\frac{R}{(\mx)^{[2]}} \overset{\cdot x}{\longrightarrow} 
\dotsb \overset{\cdot x}{\longrightarrow} 
\frac{R}{(\mx)^{[n]}} \overset{\cdot x}{\longrightarrow} 
\frac{R}{(\mx)^{[n+1]}} \overset{\cdot x}{\longrightarrow} \dotsb
\right\}.
\]
Note that all the maps in the direct system above are injective.

Consider the $R$-linear injective map $f: k = \frac{R}{\fm} 
\overset{\cdot u}{\rightarrow} \frac{R}{(\mx)}$ defined by 
$f(r+\fm) = ru +(\mx)$ for $r \in R$,  which induces
an injective $R$-linear map $\psi: k \to E$ as all the maps in the
direct system above are injective.  

By Definition~\ref{def}, we see   
$$
\ta_e (R) \cdot q^d = \length_{\R{e}} \left(\im\left(\R{e} \otimes _R
k \stackrel{1\otimes\psi}\longrightarrow \R{e}
\otimes_R E\right)\right). 
$$

Moreover, by the property of tensor product and direct limit, we see 
$$
\R{e} \otimes_R E =
\varinjlim
\left\{\R{e} \otimes \tfrac{R}{(\mx)} 
\overset{1 \otimes (\cdot x)}{\longrightarrow} 
\dotsb \overset{1 \otimes (\cdot x)}{\longrightarrow} 
\R{e} \otimes \tfrac{R}{(\mx)^{[n]}} 
\overset{1 \otimes (\cdot x)}{\longrightarrow} 
\R{e} \otimes \tfrac{R}{(\mx)^{[n+1]}} 
\overset{1 \otimes (\cdot x)}{\longrightarrow} \dotsb 
\right\}.
$$
Thus, the image of the homomorphism $\R{e} \otimes _R
k \stackrel{1\otimes\psi}\longrightarrow \R{e}
\otimes_R E$ is exactly the image of $\R{e} \otimes _R
\tfrac R{\fm}$ in the direct limit of the following direct
system 
\[
\R{e} \otimes \tfrac{R}{\fm} 
\overset{1 \otimes (\cdot u)}{\longrightarrow} 
\R{e} \otimes \tfrac{R}{(\mx)} 
\overset{1 \otimes (\cdot x)}{\longrightarrow} 
\dotsb \overset{1 \otimes (\cdot x)}{\longrightarrow} 
\R{e} \otimes \tfrac{R}{(\mx)^{[n]}} 
\overset{1 \otimes (\cdot x)}{\longrightarrow} 
\R{e} \otimes \tfrac{R}{(\mx)^{[n+1]}} 
\overset{1 \otimes (\cdot x)}{\longrightarrow} \dotsb .
\]
However, by the meaning of $\R{e}$, the above direct system can be
written as 
\[
\frac{R}{\fm\brq} \overset{\cdot u^q}{\longrightarrow} \frac{R}{(\mx)\brq} 
\overset{\cdot x^q}{\longrightarrow} \dotsb 
\overset{\cdot x^q}{\longrightarrow} \frac{R}{(\mx)^{[nq]}} 
\overset{\cdot x^q}{\longrightarrow} \frac{R}{(\mx)^{[n+1]q}} 
\overset{\cdot x^q}{\longrightarrow} \dotsb,
\]
in which 
$\tfrac{R}{(\mx)^{[nq]}} \stackrel{\cdot x^{q}}\longrightarrow
\tfrac{R}{(\mx)^{[(n+1)q]}}$ is injective for every $n$ since $\mx$ is
regular on $R$. 

Putting things together, we see 
\begin{align*}
\ta_e (R) \cdot q^d 
&= \length_{\R{e}} \left(\im\left(\R{e} \otimes _R k 
\stackrel{1\otimes\psi}\longrightarrow \R{e} \otimes_R E\right)\right)\\
&= \length_R \left(\im\left(
\frac{R}{\fm\brq} \overset{\cdot u^q}{\longrightarrow} \frac{R}{(\mx)\brq} 
\right)\right)
=\length_R \left(\frac{Ru^q+ (\mx) \brq}{(\mx) \brq}\right).
\qedhere
\end{align*} 
\end{proof}

\begin{Thm}
\label{gor}
Let $R$ be an excellent Gorenstein ring. Then the normalized Frobenius
splitting numbers are lower semicontinuous, i.e.,  $s_e$ is lower
semicontinuous for every $e \ge 0$. 
\end{Thm}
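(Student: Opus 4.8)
The plan is to follow the proof of Theorem~\ref{hom} closely, with Lemma~\ref{description} playing the role that Proposition~\ref{rewrite} played there, and to deduce openness of $U:=\{\overline Q\in\Spec(R):\ta_e(\overline Q)>r\}$ (for each $r\in\mathbb R$) from Nagata's criterion, Proposition~\ref{nagata}. Two structural remarks first: every Gorenstein ring is locally equidimensional, since each $R_P$ is Cohen--Macaulay, hence equidimensional; and $R$, being excellent, is universally catenary, so for primes $P\subseteq Q$ one has $\height(Q)=\height(P)+\height(Q/P)$ by \cite[Lemma~2 on page~250]{M} applied to the equidimensional catenary local ring $R_Q$. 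Consequently part~(i) of Proposition~\ref{nagata} for $U$ follows from \cite[Proposition~5.2]{Y} exactly as in Theorem~\ref{hom}, and it remains to establish part~(ii): given a prime $P$ of $R$, I must find $x\in R\setminus P$ with $\ta_e(Q)\ge\ta_e(P)$ for all $Q\in D(x)\cap V(P)$.

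To do this, fix a system of parameters $\mx=x_1,\dots,x_d\in R$ of the Gorenstein local ring $R_P$ (so $d=\height(P)$) and an element $u\in R$ whose image generates the socle of $R_P/(\mx)R_P$, so that Lemma~\ref{description} gives $m:=\ta_e(P)\,q^{d}=\length_{R_P}\big(\big(\tfrac{Ru^q+(\mx)\brq R}{(\mx)\brq R}\big)_P\big)$. Then I would pass to a basic open neighbourhood $D(x)$ of $P$ over which: (a)~the chain $(\mx)\brq R\subseteq Ru^q+(\mx)\brq R\subseteq R$ localizes to a filtration $(\mx)\brq R_x=L_0\subseteq\cdots\subseteq L_m=(Ru^q+(\mx)\brq R)_x\subseteq\cdots\subseteq L_{m+n}=R_x$ with all successive quotients $\cong(R/P)_x$ (by Lemma~\ref{filtration}, applied to the two inclusions, whose colengths at $P$ are finite, and spliced as in Theorem~\ref{hom}); (b)~$(R/P)_x$ is regular (the regular locus of the excellent domain $R/P$ is open and nonempty); (c)~$(\mx)R_x$ is $PR_x$-primary (the associated primes of $(\mx)R$ other than $P$ avoid $P$, because $R_P/(\mx)R_P$ is Artinian Gorenstein, hence unmixed); and (d)~$uP\subseteq(\mx)R_x$ (which holds after localizing at $P$, since $uPR_P\subseteq(\mx)R_P$). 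Only finitely many localizations are involved, so such an $x$ exists.

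Now fix $Q\in D(x)\cap V(P)$. As $(R/P)_Q$ is regular of dimension $c:=\height(Q/P)$, pick $\my=y_1,\dots,y_c\in R$ whose images form a regular system of parameters of it; then $QR_Q=PR_Q+(\my)R_Q$, and together with (c) this shows $\mx,\my$ is a system of parameters of $R_Q$ (its $d+c=\height(Q)$ elements generate an ideal with radical $QR_Q$). Apply Lemma~\ref{koszul}(1) to the filtration $L_0\subseteq L_m\subseteq L_{m+1}\subseteq\cdots\subseteq L_{m+n}$ along the flat map $R_x\to R_Q$ with the sequence $y_1^q,\dots,y_c^q$, which is regular on $(R/P)_Q$; since each quotient above $L_m$ is $(R/P)_x$, this identifies $\tfrac{R_Qu^q+(\mx,\my)\brq R_Q}{(\mx,\my)\brq R_Q}$ with $\tfrac{L_m}{L_0}\otimes_{R_x}\tfrac{R_Q}{(\my)\brq R_Q}$, and then Lemma~\ref{koszul}(2) applied to $L_0\subseteq\cdots\subseteq L_m$ shows this module has length $m\cdot\length_{R_Q}\big((R/P)_Q/(\my)\brq(R/P)_Q\big)=m\,q^{c}=\ta_e(P)\,q^{\height(Q)}$. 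On the other hand $uQR_Q=uPR_Q+u(\my)R_Q\subseteq(\mx,\my)R_Q$ by (d), so the image of $u$ in the Artinian Gorenstein ring $R_Q/(\mx,\my)R_Q$ lies in its socle; writing $\sigma$ for a socle generator, $u\equiv a\sigma$ mod $(\mx,\my)R_Q$ for some $a$, whence $u^q\equiv a^q\sigma^q$ mod $(\mx,\my)\brq R_Q$ (the intermediate binomial coefficients are divisible by $p$), so $R_Qu^q+(\mx,\my)\brq R_Q\subseteq R_Q\sigma^q+(\mx,\my)\brq R_Q$. Since Lemma~\ref{description}, applied to $R_Q$ with the system of parameters $\mx,\my$ and socle generator $\sigma$, identifies the length of the right-hand module as $\ta_e(Q)\,q^{\height(Q)}$, comparing lengths gives $\ta_e(Q)\ge\ta_e(P)$, which is what Proposition~\ref{nagata}(ii) needs.

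The delicate point is the second paragraph: arranging simultaneously, by inverting one element outside $P$, that $(\mx)R$ becomes $PR$-primary \emph{and} $R/P$ becomes regular, so that $\mx,\my$ is genuinely a system of parameters of $R_Q$ for all nearby $Q$ --- the primality here rests on the unmixedness of $R_P/(\mx)R_P$. A pleasant simplification is that one should aim only for the inequality $\ta_e(Q)\ge\ta_e(P)$, not equality: this is all Nagata's criterion requires, and it lets one avoid verifying that the single, globally chosen lift $u$ still generates the socle of $R_Q/(\mx,\my)R_Q$ --- it suffices that its image land in that socle.
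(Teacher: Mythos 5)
Your argument is correct, and it takes a genuinely leaner route than the paper at one key point. Both proofs fix a regular sequence $\mx$ whose images form a system of parameters of $R_P$, choose $u$ generating the socle of $R_P/(\mx)R_P$, build via Lemma~\ref{filtration} a filtration of $R_x$ by copies of $(R/P)_x$ sandwiching $(Ru^q+(\mx)\brq R)_x$, shrink $D(x)$ so that $(R/P)_x$ is regular, and then use Lemma~\ref{koszul} along $R_x\to R_Q$ to compute $\length_{R_Q}\bigl(\bigl(\tfrac{Ru^q+(\mx,\my)\brq}{(\mx,\my)\brq}\bigr)_Q\bigr)=m\cdot q^{\height(Q/P)}$. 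From there the paper constructs a \emph{second} filtration, this time of $(\mx)\subseteq(u)+(\mx)\subseteq R$, and applies Lemma~\ref{koszul}(1) to it to conclude that the image of $u$ actually \emph{generates} the socle of $R_Q/(\mx,\my)R_Q$; Lemma~\ref{description} for $R_Q$ with socle generator $u$ then gives the equality $\ta_e(Q)=\ta_e(P)$ on all of $D(x)\cap V(P)$. You dispense with the second filtration: you invert one more element to arrange $uP R_x\subseteq(\mx)R_x$, which forces the image of $u$ merely to \emph{lie in} the socle of $R_Q/(\mx,\my)R_Q$; you then push the relation $u\equiv a\sigma$ through the Frobenius to get $R_Qu^q+(\mx,\my)\brq R_Q\subseteq R_Q\sigma^q+(\mx,\my)\brq R_Q$ for a genuine socle generator $\sigma$, and compare lengths using Lemma~\ref{description} applied to $\sigma$. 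This yields only the inequality $\ta_e(Q)\ge\ta_e(P)$, but, as you observe, the inequality is all that part~(ii) of Proposition~\ref{nagata} requires. The trade-off: the paper's route gives the stronger statement that $s_e$ is locally constant on $D(x)\cap V(P)$ (which feeds directly into the remark that $X_{\ge r}$ is open), whereas your route is shorter because one filtration and the socle-generation check are avoided. One small remark: your condition~(c) is sound but not strictly needed --- the filtration in~(a) already shows $R_Q/(\mx)\brq R_Q$ is filtered by copies of $(R/P)_Q$, so $\sqrt{(\mx)R_Q}\supseteq PR_Q$, and combined with $QR_Q=(P+(\my))R_Q$ this already forces $\mx,\my$ to be a system of parameters of $R_Q$.
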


\begin{proof}
Let $r \in \mathbb R$ and let $P$ be a prime ideal in $R$ such that
$\ta_e(P) > r$. 

By prime avoidance, we can choose $\mx = x_1, \dotsc, x_k$ that is a
regular sequence on $R$ such that their images form a system of 
parameters on $R_P$, where $k = \height(P)$. Let $u \in R$ be such
that its image in $R_P$ generates the socle modulo $(\mx)$, the ideal
generated by $\mx$.  

Let $m := \length_{R_P}\left(\big(\frac{(u^q) + (\mx) \brq}{(\mx
  \brq)}\big)_P\right)$. By Lemma~\ref{description} we get that  
$$
m = \length_{R_P}\left(\Big(\frac{(u^q) + (\mx) \brq}{(\mx
  \brq)}\Big)_P\right)= \ta_e(P) \cdot q^{\height(P)}.
$$

Now we proceed as in the proof of Theorem~\ref{hom}. 
Applying Lemma~\ref{filtration} to $(\mx)\brq
\subseteq (u^q) + (\mx)\brq \subseteq R$ and $(\mx)
\subseteq (u) + (\mx) \subseteq R$, we therefore obtain $x \in R
\setminus P$ and filtrations
\begin{gather}
{(\mx)\brq}_x =M_0 \subseteq M_1 \subseteq \dotsb \subseteq M_m 
\subseteq M_{m+1} \subseteq \dotsb \subseteq M_{m+n}= R_x 
\tag{\ref{gor}.1} \label{filtration1}\\
\text{and} \quad 
{(\mx)}_x =N_0 \subseteq N_1 \subseteq \dotsb \subseteq  N_t = R_x,
\tag{\ref{gor}.2} \label{filtration2}
\end{gather}
in which $M_m = ((u^q) + (\mx) \brq)_x$, $N_1 = (Ru+ (\mx))_x$, and
$M_i/M_{i-1} \cong N_j/N_{j-1} \cong (R/P)_x$ for all $i = 1,
\dotsc, m+n$ and all $j = 1, \dotsc, t$.

Similarly, since $R$ is excellent and thus the regular locus of $R/P$
is open, we may just as well further assume that $(R/P)_x$ is regular.

Let $Q$ be any prime ideal in $D(x) \cap V(P)$. As $(R/P)_Q$ is
regular, let $\my = y_1,\ldots, y_h$ be chosen such that their images
in $(R/P)_Q$ form a regular system of parameters. (In particular, we
have $Q_Q = (P+(\my))_Q$ and $h = \height (Q/P)$.)
It then follows that $\my$ is a regular
sequence on $(R/(\mx))_Q$ (because of the
filtration~\eqref{filtration1}). Thus $\mx, \my$ form a system of
parameters of $R_Q$. 

As in the proof of Theorem~\ref{hom}, we apply Lemma~\ref{koszul}
to the filtration~\eqref{filtration1} and $R_x \to R_Q$ to deduce that
$$
\length _{R_Q} \left( \left(\frac{(u^q) +(\mx) \brq +(\my) \brq}
{(\mx) \brq +(\my) \brq}\right)_Q\right) = m \cdot q^h
= m \cdot q^{\height (Q/P)}.
$$

Next, we show that the image of $u$ generates the socle of
$\big(\frac R{(\mx, \my )}\big)_Q$: 
Note that $R_Q$ is Gorenstein and $QR_Q = (P+(\my))R_Q$. 
Applying Lemma~\ref{koszul}~(1)
to the filtration~\eqref{filtration2},  $R_x \to R_Q$ and the sequence
$\my$ that is regular on $(R/P)_Q$, we deduce that the image of $u$
generates the following submodule in
$\big(\frac R{(\mx, \my )}\big)_Q$
\begin{multline*}
\left(\frac{(u)+(\mx) +(\my)}{(\mx) +(\my)}\right)_Q =
\left(\frac{N_1 + (\my)N_t}{N_0 +(\my)N_t}\right)_Q \\
\cong \frac{N_1}{N_0} \otimes \frac{R_Q}{(\my)R_Q} \cong 
\frac{R}{P} \otimes \frac{R_Q}{(\my)R_Q} =
\frac{R_Q}{(P + (\my))R_Q} \cong \frac{R_Q}{QR_Q}.
\end{multline*}
Thus, as $R_Q$ is Gorenstein, the image of $u$ generates the socle of
$\big(\frac R{(\mx, \my )}\big)_Q$.

Applying again Lemma~\ref{description} to $R_Q$ and $\mx, \my$, we get
that  
$$
\ta_e(Q) \cdot q ^{\height(Q)} 
=m \cdot q^{\height (Q/P)} 
= \ta_e(P) \cdot q^{\height(P) + \height(Q/P)} .$$ 
Note that $\height(P) + \height(Q/P)  =\height(Q)$ because $R$ is
Gorenstein, hence locally  equidimensional. 
So in summary, we see 
$$\ta_e(Q) =\ta_e(P) \quad \text{for all } Q \in D(x) \cap V(P).$$

Finally, we show the lower semicontinuity of the normalized Frobenius
splitting numbers by applying Proposition~\ref{nagata}. Note that the
normalized Frobenius splitting number can only increase under
localization by in \cite [Proposition~5.2] {Y}. Thus we need to check
only part~(ii) of Proposition~\ref{nagata} for $U = \{ Q : \ta_e(Q) >
r \}$. But part~(ii) is clear by now since, for any $P \in U$, there
is $x \in R \setminus P$ such that $D(x) \cap V(P) \subseteq U$ by the
work above. It follows that $U = \{ Q : \ta_e(Q) > r \}$ is open.   
\end{proof}

\begin{Rem}
The reader should note that under the conditions of Theorem~\ref{gor},
the same proof shows that the set $X_{\geq r} = \{ Q \in \Spec(R) :
\ta_e( Q) \geq r \}$ is open for all $r \in \mathbb R$.
\end{Rem}

\section{Essentially of finite type over a semi-local excellent ring}

In the following theorem, $\widehat A$ denotes the completion of a
semi-local ring $A$ with respect to its Jacobson radical. Note that
$\widehat A$ is 
isomorphic to $\prod_{\fm} \widehat{A_{\fm}}$ in which $\fm$ runs over
all the (finitely many) maximal ideals $\fm$ of $A$ while
$\widehat{A_{\fm}}$ is the $\fm$-adic completion of $A$ (or of
$A_{\fm}$). In particular, $\widehat A$ is a homomorphic image of a
regular ring.

\begin{Thm}
\label{ess}
Let $R$ be a ring of prime characteristic $p$ that is essentially of
finite type over an excellent semi-local ring $A$. Assume that $R
\otimes_A \widehat{A}$ is locally equidimensional. 

Then the normalized Frobenius splitting numbers are lower
semicontinuous, i.e., $s_e$ is lower semicontinuous for every $e \ge 0$.
\end{Thm}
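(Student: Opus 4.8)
The plan is to reduce the essentially-of-finite-type case to the already-established homomorphic-image case (Theorem~\ref{hom}) by a faithfully flat base change to $\widehat A$, together with a descent argument for lower semicontinuity along this base change. First I would set $\widetilde R := R \otimes_A \widehat A$. Since $A$ is excellent semi-local, $\widehat A$ is a product of complete local rings, each a homomorphic image of a regular ring, so $\widehat A$ itself is a homomorphic image of a regular ring; hence $\widetilde R$, being essentially of finite type over $\widehat A$, is also (essentially of finite type over, hence a localization of a homomorphic image of a polynomial ring over) a homomorphic image of a regular ring. Moreover $\widetilde R$ is excellent (essentially of finite type over the excellent ring $\widehat A$) and locally equidimensional by hypothesis. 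Therefore Theorem~\ref{hom} applies to $\widetilde R$: the function $Q \mapsto \ta_e(\widetilde R_Q)$ is lower semicontinuous on $\Spec(\widetilde R)$.

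Next I would relate the splitting numbers across the flat map $R \to \widetilde R$. The map $\Spec(\widetilde R) \to \Spec(R)$ is surjective (faithful flatness: $A \to \widehat A$ is faithfully flat since $A$ is semi-local and we complete at the Jacobson radical, and this is preserved by the base change $R \to \widetilde R$) and in fact flat local on fibers. For $P \in \Spec(R)$, pick $Q \in \Spec(\widetilde R)$ lying over $P$; then $R_P \to \widetilde R_Q$ is a flat local homomorphism. I want $\ta_e(R_P) = \ta_e(\widetilde R_Q)$ for a suitably chosen $Q$. The natural choice is $Q$ lying over $P$ with $\widetilde R_Q / P \widetilde R_Q$ equal to a field (i.e. $Q$ minimal over $P\widetilde R$); since $\widetilde R/P\widetilde R = \kappa(P) \otimes_A \widehat A \otimes \cdots$ is essentially of finite type over the complete semilocal $\kappa(P)$-algebra, one can find such $Q$ with residue field a finitely generated field extension of $\kappa(P)$, and after a further harmless enlargement this closed fiber becomes a field extension; then $R_P \to \widetilde R_Q$ has closed fiber a field, and by \cite[Remark~2.3~(3)]{Y} (the same result invoked in the proof of Proposition~\ref{rewrite}) the normalized Frobenius splitting number is unchanged: $\ta_e(R_P) = \ta_e(\widetilde R_Q)$.

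Finally I would assemble the semicontinuity. Fix $r \in \mathbb R$ and let $U = \{ P \in \Spec(R) : \ta_e(R_P) > r\}$ and $\widetilde U = \{ Q \in \Spec(\widetilde R) : \ta_e(\widetilde R_Q) > r \}$; the latter is open by Theorem~\ref{hom}. The preimage $\pi^{-1}(U)$ under $\pi \colon \Spec(\widetilde R) \to \Spec(R)$ is contained in $\widetilde U$ (since $\ta_e$ can only increase under the localization $\widetilde R_Q \to \widetilde R_{Q'}$ for $Q \subseteq Q'$ over the locally equidimensional $\widetilde R$, by \cite[Proposition~5.2]{Y}, and equals $\ta_e(R_P)$ for the distinguished $Q$); conversely a point $Q$ with $\ta_e(\widetilde R_Q) > r$ forces... — here I must be careful, as $\ta_e$ can strictly increase along the fiber, so $\widetilde U$ need not be exactly $\pi^{-1}(U)$. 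The clean route is instead: $\pi$ is an open map (flat + finite type over each $\widehat{A_\fm}$, plus $A$-essentially-of-finite-type, gives openness of $\pi$ by generic flatness / EGA), and $U = \pi(\widetilde U \cap \{\text{distinguished points}\})$? That is still awkward. The robust alternative, which I would actually carry out, is to mimic the proof of Theorem~\ref{hom} directly via Proposition~\ref{nagata}: condition~(i) holds by \cite[Proposition~5.2]{Y} as before; for condition~(ii), given $P \in U$, choose $Q \in \widetilde U$ over $P$ with $\ta_e(\widetilde R_Q) = \ta_e(R_P)$ as above, use openness of $\widetilde U$ to get $\widetilde x \in \widetilde R \setminus Q$ with $D(\widetilde x) \cap V(Q) \subseteq \widetilde U$, descend $\widetilde x$ to some $x \in R \setminus P$ (clearing denominators and using that $R \to \widetilde R$ is faithfully flat so $D(x) \subseteq \pi(D(\widetilde x))$ appropriately), and check $D(x) \cap V(P) \subseteq U$ by lifting each such prime to $\widetilde R$ and applying the constancy-along-$D(\widetilde x)\cap V(Q)$ combined with \cite[Proposition~5.2]{Y}. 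The main obstacle, as this indicates, is precisely controlling the fibers of $\pi$: ensuring one can always pick a prime $Q$ over $P$ realizing $\ta_e(R_P)$ (closed fiber a field) and that the open set near $Q$ descends to an open set near $P$; everything else is the flat-base-change invariance from \cite{Y} plus the openness machinery already set up in Proposition~\ref{nagata} and Theorem~\ref{hom}.
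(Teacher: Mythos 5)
Your high-level reduction is the same as the paper's: base change to $S := R \otimes_A \widehat A$ (which is essentially of finite type over the complete semi-local hence regular-homomorphic-image $\widehat A$), apply Theorem~\ref{hom} there, and descend. The difference is in the descent mechanism, and here your proposal has genuine gaps that you yourself flag but do not resolve.

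First, you want to pick a prime $Q$ of $S$ over $P$ with closed fiber of $R_P \to S_Q$ equal to a field, so you can quote \cite[Remark~2.3~(3)]{Y}. This is stronger than what you can arrange: the fiber ring $\kappa(P) \otimes_A \widehat A$ is zero-dimensional at its minimal primes but its local rings there need not be fields. The paper instead invokes \cite[Theorem~5.6]{Y}, which only requires the closed fiber to be \emph{regular}, and that follows from $A$ excellent (geometrically regular fibers of $R \to S$). Second, the attempt to descend an open set $D(\widetilde x) \cap V(Q)$ from $\Spec(S)$ to $\Spec(R)$ is the real sticking point: $\widetilde x \in S$ has no canonical image in $R$, and openness of $\Spec(S) \to \Spec(R)$ doesn't directly give you an affine open containing $P$ and contained in $U$ without more work.

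The paper's argument sidesteps all of this and does not need to single out a distinguished $Q$ over $P$, nor Nagata's criterion. It writes the closed set $C = \{P' \in \Spec(S) : s_e(P') \le r\}$ as $V(I)$ for an ideal $I$ of $S$ (this uses Theorem~\ref{hom} on $S$), then shows $I \cap (R \setminus P) \ne \emptyset$ by contradiction: if not, some prime $P' \supseteq I$ of $S$ contracts to $P_1 \subseteq P$ in $R$ (contraction \emph{into} $P$, not necessarily onto $P$, is all that is needed); then $s_e(P') = s_e(P_1)$ by \cite[Theorem~5.6]{Y} (regular closed fiber via excellence) and $s_e(P_1) \ge s_e(P) > r$ by \cite[Proposition~5.2]{Y} (monotonicity under localization over the locally equidimensional $R$, which is locally equidimensional since $S$ is), contradicting $P' \in C$. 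Having found $x \in I$ with $x \notin P$, one checks directly that for any $Q \in D(x) \subseteq \Spec(R)$, any lift $Q' \in \Spec(S)$ satisfies $Q' \notin C$ and hence $s_e(Q) \ge s_e(Q') > r$ by \cite[Lemma~5.1]{Y}. So $P \in D(x) \subseteq W$, and $W$ is open. The monotonicity statements from \cite{Y} do the work that you were trying to extract from openness of the map and a distinguished fiber point; once one lets $P'$ contract to a possibly smaller prime $P_1 \subsetneq P$, the descent problem disappears.
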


\begin{proof}
Let us fix a nonnegative integer $e$ and let $r \in \mathbb R$ be any
number. We plan to show that the set $ W= \{ Q : Q \in \Spec(P),\,
\ta_e(Q) > r \}$ is open by passing to $R \otimes_A \widehat{A}$,
where Theorem~\ref{hom} applies. Fix a prime ideal $P \in W$, i.e.,
$s_e(P) > r$.  

Let $S = R \otimes_A \widehat{A}$ and let $C = \{P' \in \Spec(S) :
s_e(P') \le r\}$. Note that Theorem~\ref{hom} applies to $S$ since $S$
is essentially of finite type over $\widehat A$. 
Therefore, the normalized Frobenius splitting numbers are lower
semicontinuous on $S$. Thus, $C$ is closed in $\Spec(S)$ so there
exists an ideal $I$ of $S$ such that $V(I) = C$. 

We claim that $I \cap (R\setminus P) \neq \emptyset$: By way of
contradiction, suppose $I \cap (R\setminus P) = \emptyset$. Then there
exists a prime ideal $P' \in \Spec(S)$ such that $P' \in V(I) = C$ and
$P' \cap (R\setminus P) = \emptyset$, which simply means $P' \cap R
\subseteq  P$. Denote $P_1 := P' \cap R \in \Spec(R)$.
Now, since $A$ is excellent, we see that $R \to S$ has geometrically
regular fibers (cf. \cite[33.E~Lemma~4]{M2}) 
and hence the map $R_{P_1} \to S_{P'}$ has a regular closed 
fiber. Thus, by \cite[Theorem~5.6]{Y}, we see
$\ta_e(P') = \ta_e(P_1)$. Moreover, by \cite[Proposition~5.2]{Y} and
also noting that $R$ is locally equidimensional (since $S$ is so), we
have $\ta_e(P_1) \geq \ta_e(P)$. Consequently, $\ta_e(P') \geq
\ta_e(P) > r$, contradicting the choice of $P'$ with $P' \in C$.

Now that $I \cap (R\setminus P) \neq \emptyset$, let $x \in I \cap (R
\setminus P)$ and consider $Q \in \Spec(R)$ such that $x \notin Q$
(that is, $Q \in D(x) \subseteq \Spec(R)$).  

As $S$ is faithfully flat over $R$, there exists $Q' \in \Spec(S)$
such that $Q' \cap R =Q$. Hence $x \notin Q'$ and thus $I \not\subseteq
Q'$. This says that $Q' \notin V(I) = C$, or simply, $s_e(Q') > r$. 

We can see that $R_Q \to S_{Q'}$ is faithfully flat and so $\ta_e(Q)
\geq \ta_e(Q')$ by \cite[Lemma~5.1]{Y}. (Or use \cite[Theorem 5.6]{Y}
to deduce that $\ta_e(Q) = \ta_e(Q')$.) Thus $\ta_e(Q) > r$.

Putting everything together, we get $\ta_e(Q) > r$ for all $Q \in
D(x)$. That is 
$$
P \in D(x)  \subseteq W.
$$

By the choice of $x$, $P$ lies in the above open neighborhood
$D(x)$. This shows that the normalized Frobenius
splitting numbers are lower semicontinuous on $R$. 
\end{proof}

By Ratliff's theorem, the completion of an equidimensional excellent
local ring remains equidimensional 
(see \cite[Corollary~B.4.3 and Theorem~B.5.1]{SH}). We have the following

\begin{Cor}
If $R$ is an excellent locally equidimensional semi-local ring (e.g.,
$R$ is an excellent equidimensional local ring) of prime 
characteristic $p$, the function $s_e: \Spec(R) \to \mathbb Q$ is
lower semicontinuous. 
\end{Cor}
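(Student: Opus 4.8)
The plan is to obtain this as an immediate consequence of Theorem~\ref{ess}, taking the base ring $A$ to be $R$ itself. Since $R$ is trivially essentially of finite type over $R$, and $R$ is excellent and semi-local by assumption, the only hypothesis of Theorem~\ref{ess} that needs checking is that $R \otimes_R \widehat R = \widehat R$ is locally equidimensional; once that is in hand, Theorem~\ref{ess} yields the lower semicontinuity of $s_e$ on $\Spec(R)$ with no further work.

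To verify that $\widehat R$ is locally equidimensional, I would first reduce to the local case. Writing $\widehat R \cong \prod_{\fm} \widehat{R_\fm}$ over the finitely many maximal ideals $\fm$ of $R$, every prime of $\widehat R$ lies in a single factor and localizing there recovers a localization of the corresponding $\widehat{R_\fm}$; so it is enough to show that each complete local ring $\widehat{R_\fm}$ is locally equidimensional. Next, observe that $R_\fm$ is an excellent equidimensional local ring (excellence localizes, and $R$ is locally equidimensional by hypothesis), so by Ratliff's theorem — as recorded just above via \cite[Corollary~B.4.3 and Theorem~B.5.1]{SH} — the completion $\widehat{R_\fm}$ is again equidimensional.

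The step I expect to be the real content is upgrading ``$\widehat{R_\fm}$ is equidimensional'' to ``$\widehat{R_\fm}$ is locally equidimensional''. For this I would invoke the fact that quasi-unmixedness (formal equidimensionality) is inherited by localizations: since $\widehat{R_\fm}$ is complete, hence excellent, being equidimensional it is formally equidimensional, and therefore so is every one of its localizations, which in particular makes each of them equidimensional. This same observation is what guarantees that the parenthetical example in the statement — an excellent equidimensional local ring — indeed satisfies the ``locally equidimensional'' hypothesis. With $\widehat R$ now known to be locally equidimensional, Theorem~\ref{ess} applies and finishes the proof.
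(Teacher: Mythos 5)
Your argument is correct and matches the paper's intended route: the corollary is presented as an immediate consequence of Theorem~\ref{ess} applied with $A=R$, together with the remark on Ratliff's theorem directly preceding it. You also supply a detail the paper leaves implicit --- upgrading ``$\widehat{R_\fm}$ is equidimensional'' to ``$\widehat{R}$ is \emph{locally} equidimensional'' via the fact that a complete equidimensional local ring is quasi-unmixed and quasi-unmixedness localizes --- which is precisely what the cited appendix of Swanson--Huneke is intended to cover.
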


\begin{Rem}
The reader should note that under the conditions of Theorem~\ref{ess},
the proof shows that the set $X_{\geq r} = \{Q \in \Spec(R) :
\ta_e( Q) \geq r \}$ is open for all $r \in \mathbb R$. 
(To do this, one accordingly defines $C = \{P' \in \Spec(S) :
s_e(P') <  r\} =  V(I)$ because of Remark~\ref{rmk-hom}.)
\end{Rem}

\begin{Rem} \label{ess-gamma}
Note that the proof of Theorem~\ref{ess} relies on the lower
semicontinuity of $s_e$ on the ring $S = R \otimes_A \widehat A$.
Although this is covered in Theorem~\ref{hom}, we would like to
point out that the lower semicontinuity of $s_e$ on $S$ can also be
proved via $\Gamma$-construction (see \cite{HH}). By applying
$\Gamma$-construction, one may reduce the lower semicontinuity of
$s_e$ on $S$ to the F-finite case and therefore
Corollary~\ref{F-finite} applies. 
\end{Rem}

\begin{Rem}
In general, one may study the notion of \emph{$e^{th}$ normalized Frobenius
  splitting numbers} of any finitely generated module $M$. 
Indeed, for any local ring $(R,\fm,k)$ of prime 
characteristic $p$ and any finitely generated module $R$-module $M$,
one may define $s_e(M):= \frac{\#(^e\! M)}{q^{\dim(R)}}$, in which
$\#(^e\! M)$ is the same as defined in
\cite[Definition~2.2]{Y}. 

Thus, for any fixed $e \ge 0$, there is a
function $s_{e,M}: \Spec(R) \to \mathbb Q$ defined by $P \mapsto
s_e(M_P)$. Note that $s_{e,R} = s_e$ when $M=R$. 
Correspondingly, one may conjecture that $s_{e,M}$ is lower
semicontinuous if the ring is excellent and locally equidimensional
(cf. Conjecture~\ref{conj}).   
In fact, the lower semicontinuity of $s_{e,M}$ can be verified in 
the cases when $R$ is as in Corollary~\ref{F-finite}, as in
Theorem~\ref{hom}, as in Theorem~\ref{gor} and with $M$ being 
(locally) maximal Cohen-Macaulay, or as in Theorem~\ref{ess}. 
Note that when $R$ is as in Theorem~\ref{ess}, one may \emph{also}
prove the lower semicontinuity of $s_{e,M}$ via the approach described
in Remark~\ref{ess-gamma}.  
\end{Rem}



\begin{thebibliography}{GST}

\bibitem{Ab} I.~M.~Aberbach, 
    \emph{The existence of the $F$-signature for rings with large
    $\mathbb Q$-Gorenstein locus}, J. Algebra \textbf{319} (2008),
    no.~1, 2994--3005.

\bibitem{AE} \textrm{I.~M.~Aberbach, F.~Enescu}, \emph{The structure
    of F-pure rings}  Math. Z.  \textbf{250}  (2005),  no. 4,
  791--806. 

\bibitem{AE2} \bysame, 
\emph{When does the F-signature exist?}  Ann. Fac. Sci. Toulouse
Math. (6)  \textbf{15}  (2006),  no. 2, 195--201.  

\bibitem{AL} \textrm{I.~M.~Aberbach, G.~Leuschke}, \emph{The
    $F$-signature and strong $F$-regularity},
  Math. Res. Lett. \textbf{10} (2003), no. 1, 51--56. 

\bibitem{F} {R. Fedder}, 
\emph{$F$-purity and rational singularity}, 
Trans. Amer. Math. Soc. \textbf{278} (1983), 461--480.

\bibitem{G} {D. Glassbreener},  
\emph{Strong $F$-regularity in images of regular rings}, 
Proc. Amer. Math. Soc. \textbf{124} (1996), no.~2, 345--353.

\bibitem{HH} M. Hochster and C. Huneke,  \emph{$F$-regularity,
    test elements, and smooth base change},  Tans. Amer. Math. Soc.
    \textbf{346} (1994), 1--62. 

\bibitem{HL} \textrm{C.~Huneke, G.~Leuschke}, \emph{Two theorems about
    maximal Cohen-Macaulay modules}, Math. Ann.  \textbf{324}  (2002),
  no. 2, 391--404. 

\bibitem{K} \textrm{E.~Kunz}, \emph{On Noetherian rings of
    characteristic $p$}, Amer. J. Math. \textbf{98} (1976), no. 4,
  999--1013. 

\bibitem{M2} \textrm{H. Matsumura}, \emph{Commutative algebra}, Benjamin, 1980.

\bibitem{M} \bysame, 
\emph{Commutative ring theory}, Cambridge University Press, 1986.

\bibitem{S} \textrm{N.~I.~Shepherd-Barron}, \emph{On a problem of
    Ernst Kunz concerning certain characteristic functions of local
    rings}, Arch. Math. (Basel) \textbf{31} (1978/79) no. 6,
  562--564. 

\bibitem{Si} A. Singh, \emph{The $F$-signature of an affine
    semigroup ring,} J. Pure Appl. Algebra \textbf{196}
    (2005) 313-321.
    
    \bibitem{SH} \textrm{I.~Swanson, C.~Huneke}, \emph{Integral
    closure of ideals, rings, and modules}, London Mathematical
    Society, LNS \textbf{336}, Cambridge University Press, 2006. 


\bibitem{Y2} \textrm{Y.~Yao}, \emph{Modules with finite
    $F$-representation type},   J. London Math. Soc. (2)  \textbf{72}
  (2005),  no. 1, 53--72. 

\bibitem{Y} \bysame, 
\emph{Observations on the $F$-signature of local rings of
  characteristic $p>0$},  J. Algebra \textbf{299}  (2006),  no. 1,
198--218.  
\end{thebibliography}
\end{document}